\documentclass[11pt, leqno]{amsart}
\usepackage[dvipsnames]{xcolor}
\usepackage{amsfonts,amssymb, amscd}
\usepackage{amsmath}
\usepackage{lmodern}
\usepackage{mathrsfs}
\usepackage{amsthm}
\usepackage{qsymbols}
\usepackage{mathtools}
\mathtoolsset{showonlyrefs}
\usepackage{dsfont}
\usepackage{graphicx}
\usepackage{latexsym}
\usepackage{chngcntr}
\usepackage[noadjust]{cite}
\usepackage{paralist}
\usepackage[parfill]{parskip}
\usepackage{bm}
\usepackage{tikz-cd}
\usetikzlibrary{calc}
\usepackage{nicefrac}
\usepackage{float}
\usepackage{csquotes}
\usepackage{esint}
\usepackage{multirow}
\usepackage{todonotes}
\usepackage[shortlabels]{enumitem}
\setlist[enumerate,1]{label = \normalfont(\arabic*), ref = (\arabic*)}
\usepackage{cancel}
\usepackage{comment}
\usepackage{euflag}

\newtheorem{theorem}{Theorem}[section]
\newtheorem{lemma}[theorem]{Lemma}
\newtheorem{proposition}[theorem]{Proposition}
\newtheorem{corollary}[theorem]{Corollary}
\newtheorem{taggedtheoremx}{Assumption}
\newenvironment{assumption}[1]
{\renewcommand\thetaggedtheoremx{#1}\taggedtheoremx}
{\endtaggedtheoremx}
\theoremstyle{definition}
\newtheorem{definition}[theorem]{Definition}
\newtheorem{remark}[theorem]{Remark}
\newtheorem{example}[theorem]{Example}

\usepackage[backref=page]{hyperref}
\hypersetup{
	plainpages=false,
	pdfpagelabels,
	colorlinks,
	linkcolor={cyan!90!black},
	citecolor={magenta},
	urlcolor={green!40!black},
	bookmarksdepth=2,
} %

\newcommand{\R}{\mathbb{R}}

\renewcommand{\L}{\mathrm{L}}

\newcommand{\VMO}{\mathrm{VMO}}
\newcommand{\BUC}{\mathrm{BUC}}
\newcommand{\X}{\mathrm{X}}

\newcommand{\Z}{\mathrm{Z}}

\let\SS\S
\renewcommand{\S}{\mathcal{S}}

\newcommand{\Ext}{\mathcal{E}}
\newcommand{\Sol}{\mathcal{R}}

\newcommand{\Cont}{\mathrm{C}}

\newcommand{\e}{\mathrm{e}}
\renewcommand{\d}{\,\mathrm{d}}

\newcommand{\cc}{\mathrm{c}}

\newcommand{\eps}{\varepsilon}
\newcommand{\B}{\mathrm{B}}

\renewcommand{\H}{\mathrm{H}}

\newcommand{\ind}{\mathbf{1}}

\DeclareMathOperator{\dist}{d}

\DeclareMathOperator{\Div}{div}
\DeclareMathOperator{\Ran}{ran}
\DeclareMathOperator{\conv}{conv}
\DeclareMathOperator*{\esssup}{ess\,sup}

\newcommand{\loc}{\mathrm{loc}}

\newcommand{\Id}{\mathrm{Id}}

\def\YYint#1#2#3{{\setbox0=\hbox{$#1{#2#3}{\iint}$}
		\vcenter{\hbox{$#2#3$}}\kern-.51\wd0}}

\makeatletter
\@namedef{subjclassname@2020}{%
	\textup{2020} Mathematics Subject Classification}
\makeatother

\title[Weak solutions to quasilinear PDEs with critical data]{Existence and uniqueness of weak solutions to quasilinear PDEs with critical data}

\author{Pascal Auscher}
\address{Université Paris-Saclay
\\ France-Australia Mathematical Sciences and Interactions ANU - CNRS International Research Laboratory, Canberra, ACT 2601, Australia.}
\email{pascal.auscher@universite-paris-saclay.fr}
\author{Sebastian Bechtel}
\address{Université Paris-Saclay, CNRS \\ Laboratoire de Mathématiques d’Orsay \\ 91405 Orsay \\ France}
\email{sebastian.bechtel@universite-paris-saclay.fr}
\subjclass[2020]{Primary: 35A01, 35A02, 35D30. Secondary: 42B37, 42B35.}
\date{\today}
\dedicatory{}
\thanks{This project has received funding from the European Union’s Horizon 2020 research and innovation programme under the Marie Skłodowska-Curie grant agreement No 101034255 \euflag{}.
The second-named author thanks the CNRS IRL FAMSI for support.
A CC-BY 4.0 \url{https://creativecommons.org/licenses/by/4.0/} public copyright license has been applied by the authors to the present document and will be applied to all subsequent versions up to the Author Accepted Manuscript arising from this submission.}
\keywords{quasilinear problems, bounded weak solutions, a priori properties, global solutions, non-autonomous parabolic equations}
\begin{document}
	\begin{abstract}

		We establish existence and uniqueness of global, bounded weak solutions to quasilinear PDEs with bounded, uniformly continuous initial data and investigate their properties. Moreover, we establish existence of bounded weak solutions when the initial data is merely bounded.

	\end{abstract}
	\maketitle

	\allowdisplaybreaks

	\setcounter{tocdepth}{1}
	\tableofcontents

	\section{Introduction}
	\label{sec:intro}

	Fix the dimension $n \geq 1$. We consider the quasilinear problem
	\begin{align}
		\label{eq:ql}
		\tag{QL}
		\partial_t u - \Div(a(t,x,u)\nabla u) = 0, \quad u(0) = u_0.
	\end{align}

	Here, the coefficient function $a \colon [0,\infty) \times \R^n \times O \to \R^{n\times n}$ with $O \subseteq \R$ open satisfies Assumption~\ref{ass:a} below, and $u_0 \in \BUC(\R^n)$ with the closure of its range $\overline{\Ran(u_0)}$ contained in $O$.

	\begin{assumption}{A}
		\label{ass:a}
		The non-linearity $a$ satisfies the following:
		\begin{enumerate}
			\item\label{it:a1} (Local ellipticity) For all compact $K \subseteq O$ there exists $\lambda(K) \in (0,\infty)$ such that
			\begin{align}
				\label{eq:elliptic}
				a(t,x,y) \xi \cdot \xi \geq \lambda(K) |\xi|^2, \quad t \in [0,\infty), x\in \R^n, y\in K, \xi \in \R^n.
			\end{align}
			\item\label{it:a2} (Local Lipschitz condition) For all $T \in [0,\infty)$ and all compact $K \subseteq O$ there exists a finite constant $C_L(T,K)$ non-decreasing in $T$ such that, for all $t\in [0,T]$, $x \in \R^n$ and $y,y' \in K$, there holds
			\begin{align}
				\label{eq:loc_lip}
				|a(t,x,y) - a(t,x,y')| \leq C_L(T,K) |y-y'|.
			\end{align}
			\item\label{it:a3} (Bounded equilibrium) For all $T \in [0,\infty)$ there exists a finite constant $C_E(T)$ non-decreasing in $T$ such that, for all $t\in [0,T]$ and $x\in \R^n$, there holds
			\begin{align}
				\label{eq:equilibrium}
				|a(t,x,0)| \leq C_E(T).
			\end{align}
			\item\label{it:a4} (Local uniform continuity) For all $T \in (0,\infty)$ and all compact $K \subseteq O$, the restriction of $a$ to $[0,T] \times \R^n \times K$ is uniformly continuous.
		\end{enumerate}
	\end{assumption}

	The following example shows that $a(t,x,y)$ can be unbounded and degenerate in $y$. Nevertheless, our quasilinear problem has the nature of a uniformly parabolic problem thanks to the choice of the initial datum $u_0$.

	\begin{example}[Unbounded and degenerate $a$]
		For $m \in (0,\infty)$,
		consider the quasilinear problem
		\begin{align}
			\partial_t u - \Div(u^m \nabla u) = 0, \quad u(0) = u_0.
		\end{align}
		It corresponds to $a(t,x,y) \coloneqq y^m \cdot \Id$ and satisfies Assumption~\ref{ass:a} when $O \coloneqq (0, \infty)$. Then, every $u_0 \in \BUC(\R^n)$ with $\inf_{x\in \R^n} u_0(x) > 0$ is an admissible initial condition.
	\end{example}

	Our goal is to show existence of a unique global bounded weak solution $u \colon (0, \infty) \times \R^n \to O$ to~\eqref{eq:ql} and to investigate its properties.
	Our definition of a bounded weak solution to~\eqref{eq:ql} is the following.

	\begin{definition}[Bounded weak solutions to~\eqref{eq:ql}]
		\label{def:weak_ql}
		Let $O \subseteq \R$ open and $T \in (0,\infty]$.
		Given $a \colon (0,T) \times \R^n \times O \to \R^{n\times n}$ measurable and locally bounded, and $u_0 \colon \R^n \to \R$ measurable and locally bounded, call $u\colon (0,T) \times \R^n \to O$ a \emph{bounded weak solution} to~\eqref{eq:ql} if $u$ is essentially bounded with $\overline{\vphantom{t}\Ran}(u) \subseteq O$, $\nabla u \in \L^2_\loc((0,T) \times \R^n)$, and $u$ satisfies the equation weakly:
		\begin{align}
			\iint_{(0,T) \times \R^n} u (-\partial_t \phi) + a(t,x,u)\nabla u \cdot \nabla \phi \d x \d t = 0, \quad \phi \in \Cont_\cc^\infty((0,T) \times \R^n),
		\end{align}
		and the distributional limit:
		\begin{align}
			\label{eq:weak_ql_def_limit}
			\langle u(t), \psi \rangle \to \langle u_0, \psi \rangle \;\;\text{as}\;\; t\to 0, \quad \psi \in \Cont_\cc^\infty(\R^n).
		\end{align}
	\end{definition}
	\begin{remark}[Trace]
		The limit~\eqref{eq:weak_ql_def_limit} always makes sense from the other assumptions, see Proposition~\ref{prop:ql_apriori}\ref{it:ql1}.
	\end{remark}

	The following is our main result.

	\begin{theorem}[Well-posedness and regularity of~\eqref{eq:ql}]
		\label{thm:main}
		Let $O \subseteq \R$ be open and suppose that $a \colon [0,\infty) \times \R^n \times O \to \R^{n\times n}$ satisfies Assumption~\ref{ass:a}.
		Then, for every $u_0 \in \BUC(\R^n)$ with $\overline{\Ran(u_0)} \subseteq O$
		the following hold:
		\begin{itemize}
			\item (Existence and Uniqueness) There exists a unique, global, bounded weak solution $u \colon (0,\infty) \times \R^n \to O$ to~\eqref{eq:ql}.
			\item (Regularity) The solution $u$ is uniformly continuous on $[0,\infty) \times \R^n$,
			satisfies the Carleson measure estimate
			\begin{align}
				\sup_{x\in \R^n} \sup_{t \in (0,\infty)}  \bigg( \int_0^t \fint_{\B(x,\sqrt{t})} |\nabla u(s,y)|^2 \d y \d s \bigg)^\frac{1}{2} \leq C(\overline{\Ran(u_0)}) \| u_0 \|_\infty,
			\end{align}
			and for all $T\in (0,\infty)$ and $q\in (1,\infty)$ we have
			\begin{align}
				\sup_{x\in \R^n} \sup_{t \in (0,T)}  \bigg( \fint_{t/2}^t \fint_{\B(x, \sqrt{t})} |s^{\frac{1}{2}} \nabla u(s,y)|^q \d y \d s \bigg)^\frac{1}{q} < \infty.
			\end{align}
			\item (Stability) We have $\overline{\Ran(u)} = \overline{\Ran(u_0)}$ and $u(t) \to u_0$ in $\L^\infty(\R^n)$ as $t \to 0$. Moreover, if $\lim_{|x| \to \infty} u_0(x) = c$ exists, then $u(t) \to c$ in $\L^\infty(\R^n)$ as $t \to \infty$.
		\end{itemize}
	\end{theorem}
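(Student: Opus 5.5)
We plan to reduce the quasilinear problem to a family of linear non-autonomous problems and close a fixed-point argument in a space adapted to critical $\L^\infty$ data. The linear input is the following: for a measurable, uniformly elliptic and bounded coefficient $b(t,x)$, the Cauchy problem $\partial_t v - \Div(b\nabla v)=0$, $v(0)=u_0\in\L^\infty$, has a unique bounded weak solution. That solution obeys the maximum principle $\overline{\Ran(v)}\subseteq\overline{\conv}\,\Ran(u_0)$. By Caccioppoli's inequality it also obeys the Carleson estimate $\sup_{x,t}\big(\int_0^t\fint_{\B(x,\sqrt t)}|\nabla v|^2\big)^{1/2}\lesssim\|u_0\|_\infty$. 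Finally, De Giorgi--Nash--Moser gives Hölder regularity in the interior, with constants depending only on the ellipticity and bound of $b$.

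Throughout, let $K$ be a compact interval in $O$ containing $\overline{\Ran(u_0)}$. We first replace $a$ outside $K$ by $a(t,x,\pi_K y)$, where $\pi_K$ is the projection onto $K$. This makes $a$ globally elliptic with constant $\lambda(K)$, bounded by $C_E(T)+C_L(T,K)\,\mathrm{diam}$, and Lipschitz. Once a solution is found, the maximum principle shows it takes values in $\conv\Ran(u_0)\subseteq K$, so the truncation can be removed.

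\textbf{Existence for $\BUC$ data.} We set up a Schauder (or Leray--Schauder) fixed point on a short interval $[0,T]$. Given $w$ uniformly continuous with values in $K$, define $\Phi(w)=v$, where $v$ solves the linear problem with $b=a(t,x,w)$.

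1. We show that $\Phi$ maps the convex set
\[
\{w:\ \Ran w\subseteq K,\ \text{modulus of continuity}\le\omega\}
\]
into itself. Spatial uniform continuity of $v$ follows from translation invariance: $v(\cdot+h)-v$ solves an equation with a source term $\Div\big((b(\cdot+h)-b)\nabla v(\cdot+h)\big)$. By Assumption~\ref{ass:a}\ref{it:a4} and the modulus of $w$, the difference $b(\cdot+h)-b$ is small, and the energy/Carleson bounds control the forcing. Continuity at $t=0$ comes from comparing $v$ with a mollified initial datum, using the uniform continuity of $u_0$.

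2. We obtain compactness from local Hölder bounds away from $t=0$ together with this uniform modulus near $t=0$, and continuity of $\Phi$ from stability of linear solutions under a.e. convergence of coefficients (Meyers/energy estimates).

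3. Since the constants depend only on $K$ and $T$, and $\|u(T)\|_\infty$ and $\Ran(u(T))$ do not grow, the argument iterates and yields a global solution.

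\textbf{Uniqueness and regularity.} For uniqueness, let $u_1,u_2$ be two solutions and $z=u_1-u_2$. Then
\[
\partial_t z-\Div\big(a(u_1)\nabla z\big)=\Div\big((a(u_1)-a(u_2))\nabla u_2\big),
\]
and the forcing is bounded by $C_L|z|\,|\nabla u_2|$. We plan to use the $\L^q$ bound on $s^{1/2}\nabla u_2$ with $q>2$ and estimate $z$ in $\L^\infty$ via a linear estimate for divergence-form forcing in tent-type spaces, which gives $\|z\|_{\L^\infty((0,\tau)\times\R^n)}\le C\,\eps(\tau)\,\|z\|_\infty$ with $\eps(\tau)\to0$. \emph{This is the main obstacle.} The natural Carleson control of $\nabla u_2$ is only critical and gives no smallness, so we need the higher integrability ($q>2$, Gehring/Meyers-type local estimates) together with the vanishing of the Carleson norm on small scales, which holds because $u_0\in\BUC$. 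Once uniqueness holds on a short interval, it propagates by iteration. The $q$-estimate itself comes from Meyers' reverse Hölder inequality; for arbitrary $q$ we would additionally use that the coefficients $a(t,x,u)$ are uniformly continuous (by \ref{it:a4} and continuity of $u$), so that $W^{1,q}$ estimates for $\VMO$ coefficients apply.

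\textbf{Stability.} The identity $\overline{\Ran u}=\overline{\Ran u_0}$ follows from the maximum principle combined with continuity at $t=0$. Uniform convergence $u(t)\to u_0$ is the uniform continuity on $[0,\infty)\times\R^n$. For the limit as $t\to\infty$, we apply the maximum principle to $u-c$ on the region $|x|\ge R$. Alternatively, we can argue by rescaling: parabolic rescaling at large times turns the data into something nearly constant.
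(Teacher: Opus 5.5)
\textbf{Uniqueness does not close as proposed.} This is the step you flag as the main obstacle, and it is where the gap lies. The inequality $\|z\|_{\L^\infty(\tau)}\le C\eps(\tau)\|z\|_{\L^\infty(\tau)}$ requires a linear bound $\|z\|_{\L^\infty(\tau)}\lesssim\|F\|_{\Z^{\infty,q}_{-\nicefrac{1}{2}}(\tau)}$ for bounded solutions of $\partial_t z-\Div(A\nabla z)=\Div(F)$, $z(0)=0$. Such a bound is false for $q\le n+2$. The $\Z$-norm only controls $F$ in $\L^q$ on cylinders of size $\sqrt t$, and divergence-form forcing that is merely $\L^q$ with $q\le n+2$ near an interior point can produce unbounded solutions; this is the De Giorgi--Nash--Moser threshold in parabolic dimension $n+2$. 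So Meyers' exponent $2+\eps$ cannot give you $\L^\infty$ control of $z$, and neither can the Carleson control ($q=2$) with its vanishing at small scales. What you actually need is threefold. First, the $\L^\infty$ estimate for some $q>n+2$. Second, that estimate for a time-dependent coefficient such as $a(t,x,u_1)$. Third, the smallness $\|\nabla u_2\|_{\Z^{\infty,q}_{-\nicefrac{1}{2}}(\tau)}\to0$ for that same $q>n+2$. None of these is supplied, and they are the core of the theorem.

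\textbf{How the paper handles it.} The linear input is Lemma~\ref{lem:existence_LP}. For autonomous, bounded, elliptic and uniformly continuous $A_0$ and $q>n+2$, it gives solvability with $\|u\|_{\L^\infty(T)}+\|\nabla u\|_{\Z^{\infty,q}_{-\nicefrac{1}{2}}(T)}\le C(T)\|F\|_{\Z^{\infty,q}_{-\nicefrac{1}{2}}(T)}$. Time dependence is removed by freezing $A_0(x)=a(0,x,u_0(x))$ and moving $(a(t,x,u)-A_0)\nabla u$ into the forcing. This term is small because $\|a(\cdot,\cdot,u)-A_0\|_{\L^\infty(t)}\to0$, by uniform continuity of $u$ up to $t=0$ (Proposition~\ref{prop:apriori}\ref{it:buc}) and Assumption~\ref{ass:a}\ref{it:a4}. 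Since the term contains the gradient of the unknown, one must control the $\X(T)$-norm, not just $\L^\infty$.

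Uniqueness is then a branching-time argument. After shifting to $\tau$, both solutions lie in the ball $\X_{w_0}(r,\delta)$ on which $\Theta$ is a strict contraction (Lemma~\ref{lem:fp_self-mapping}), so they coincide. Membership in that ball is exactly Proposition~\ref{prop:ql_apriori}\ref{it:ql3}: $\|\nabla u\|_{\Z^{\infty,q}_{-\nicefrac{1}{2}}(t)}\to0$ for $q>n+2$. This rests on the $W^{1,q}$ theory for uniformly continuous coefficients (Proposition~\ref{prop:apriori}\ref{it:buc_grad}) and on the small-time behaviour of the free evolution (Lemma~\ref{lem:gradient_free_evolution_BUC}). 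In your sketch that $W^{1,q}$ theory appears only for the regularity claim, but it is indispensable for uniqueness.

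\textbf{Secondary points.} Your Schauder--Tychonoff existence argument is a genuinely different and viable route, but Step~1 is wrong as written. Energy or Carleson control of $(b(\cdot+h)-b)\nabla v(\cdot+h)$ gives no $\L^\infty$ bound on $v(\cdot+h)-v$, for the same threshold reason. The step is also unnecessary. Interior De Giorgi--Nash Hölder bounds, together with your comparison with mollified data at $t=0$, yield a modulus depending only on $u_0$ and the structural constants (Proposition~\ref{prop:apriori}\ref{it:buc}). That makes your convex set invariant. Compactness in the locally uniform topology, plus continuity of $\Phi$ via weak limits and Aronson's linear uniqueness, then give existence using only measurable-coefficient theory, much like the proof of Corollary~\ref{cor:Loo}. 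The price is that existence and uniqueness decouple, whereas the paper's contraction yields both at once.

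Finally, the maximum principle on $\{|x|\ge R\}$ cannot give $u(t)\to c$, because the lateral values of $u-c$ on $|x|=R$ are not small. The paper instead uses the representation by Aronson's fundamental solution and its Gaussian upper bound, giving $|u(t,x)-c|\le\eps+C t^{-\nicefrac{n}{2}}\|u_0-c\|_{\L^1(K)}$. Your rescaling heuristic would need this same kernel bound to become rigorous.
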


	Let us highlight a few points in this result.
	For sufficiently regular $a$ and $u_0$, wellposedness and regularity of bounded weak solutions to~\eqref{eq:ql} were established by Ladyzhenskaya, Solonnikov and Ural'tseva in~\cite[Ch.\,V,~Thm.\,8.1]{LSU}.
	Here, we push things to a scaling-critical minimal regularity setting, which has not been addressed before.
	Our approach bears on the corresponding autonomous and non-autonomous theory of bounded weak solutions to parabolic linear problems $$\partial_t u -\Div(A\nabla u) = \Div(F), \quad u(0) = u_0,$$
	with $A$ uniformly elliptic (and potentially uniformly continuous) and $u_0$ bounded (and potentially uniformly continuous).
	Boundedness is a scale invariant condition under parabolic scaling on solutions, so taking $u_0$ bounded is natural. Thus, we need to work with a class of source terms $F$ respecting the invariance likewise.
	The non-autonomous theory for the homogeneous equation with measurable $A$ and $u_0$ bounded is quite complete (existence, uniqueness, regularity).
	The scaling suggests to control $\sqrt{t} \nabla u$ uniformly, but this is too demanding in our low regularity regime. Instead, we use solution spaces in which we replace pointwise control by $\L^q$ control in average following scale considerations, and we use the same spaces for source terms. Such spaces, belonging to the family of weighted $\Z$-spaces, were already used in our prior work~\cite{AB} on reaction--diffusion equations to derive new bounds for autonomous linear problems. These bounds give us the possibility to setup a fixed-point argument for existence if we take $q$ large enough. Bounded weak solutions are shown to be a priori uniformly continuous, which is the key to establish that they are global and unique.

	In contrast to large parts of the literature, we do not work in a bounded domain of the Euclidean space.
	The unboundedness of $\R^n$ is another key difficulty: it rules out the usage of classical compactness arguments to obtain existence, and the mass of $u_0$ is in general infinite, so that uniqueness of solutions cannot be ensured by $\L^1$-techniques.

	Having Theorem~\ref{thm:main} in hand enables us to drop various conditions on $a$ and $u_0$ and still obtain existence of bounded weak solutions, this time, by compactness arguments. For example, if we drop uniform continuity of $u_0$, we obtain the following result, improving on~\cite[Ch.\,V, Rem.~8.1]{LSU}.

	\begin{corollary}[Existence to~\eqref{eq:ql} with bounded data]
		\label{cor:Loo}
		Let $O \subseteq \R$ be open and suppose that $a \colon [0,\infty) \times \R^n \times O \to \R^{n\times n}$ satisfies Assumption~\ref{ass:a}.
		Then, for every $u_0 \in \L^\infty(\R^n)$ with $\overline{\vphantom{t}\Ran}(u_0) \subseteq O$, there exists a global, bounded weak solution $u \colon (0,\infty) \times \R^n \to O$ to~\eqref{eq:ql}.
	\end{corollary}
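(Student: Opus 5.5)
We approximate $u_0 \in \L^\infty(\R^n)$ by uniformly continuous data, solve with Theorem~\ref{thm:main}, and pass to the limit by compactness.

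\textbf{Approximation.} Let $K \coloneqq \overline{\Ran}(u_0)$, a compact subset of $O$, and let $\rho_k$ be a standard mollifier. Set $u_{0,k} \coloneqq \rho_k * u_0$. Then $u_{0,k} \in \BUC(\R^n)$ and $\overline{\Ran(u_{0,k})} \subseteq \conv(K)$. Since $\conv(K)$ need not lie in $O$, we would rather take $u_{0,k} \coloneqq \Phi(\rho_k*u_0)$ for a suitable continuous retraction onto a compact neighbourhood of $K$ in $O$. A simpler device is to note that $\rho_k*u_0$ takes values in $K_\delta=\{y: \dist(y,K)\le\delta\}\subseteq O$ on the set where $u_0$ is nearly constant at scale $1/k$, and otherwise to compose with a Lipschitz map $\Phi\colon\R\to K_\delta$ equal to the identity on $K$. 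In every variant we get $u_{0,k}\in\BUC$ with range in a fixed compact $K'\subseteq O$ and $u_{0,k}\to u_0$ a.e. with $\|u_{0,k}\|_\infty \le C$. Theorem~\ref{thm:main} then yields solutions $u_k$, and its stability part gives $\overline{\Ran(u_k)} = \overline{\Ran(u_{0,k})}\subseteq K'$, uniformly in $k$.

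\textbf{Uniform bounds and compactness.} On $K'$ we have uniform ellipticity $\lambda(K')$. Assumption~\ref{ass:a}\ref{it:a2}--\ref{it:a3} bounds $|a(t,x,u_k)|$ uniformly on $[0,T]$. The Carleson estimate of Theorem~\ref{thm:main} bounds $\nabla u_k$ in $\L^2_\loc([0,\infty)\times\R^n)$ uniformly in $k$, with a constant depending only on $K'$. Each $u_k$ is then a weak solution of a linear uniformly parabolic equation with bounded measurable coefficients $A_k(t,x)=a(t,x,u_k)$, so De Giorgi--Nash--Moser gives uniform local Hölder bounds on compacts of $(0,\infty)\times\R^n$. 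By Arzelà--Ascoli and a diagonal argument, a subsequence converges locally uniformly on $(0,\infty)\times\R^n$ to some $u$ with values in $K'$, and $\nabla u_k \rightharpoonup \nabla u$ weakly in $\L^2_\loc$. By \ref{it:a4} (or just continuity of $a$ in $y$ with \ref{it:a2}), $a(t,x,u_k)\to a(t,x,u)$ locally uniformly. The product $a(\cdot,u_k)\nabla u_k$ therefore converges weakly to $a(\cdot,u)\nabla u$, and $u$ solves the equation weakly. We also get $\nabla u\in\L^2_\loc$ and $\overline{\Ran}(u)\subseteq K'\subseteq O$.

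\textbf{Initial trace (main obstacle).} We must show $\langle u(t),\psi\rangle\to\langle u_0,\psi\rangle$. Testing the equation for $u_k$ against $\psi$ gives, for all $t>0$,
\[
|\langle u_k(t)-u_{0,k},\psi\rangle| \le \int_0^t\!\!\int |a(s,x,u_k)\nabla u_k\cdot\nabla\psi| \d x \d s \le C_\psi \sqrt{t}\, \Big(\int_0^t\!\!\int_{\operatorname{supp}\psi}|\nabla u_k|^2\Big)^{1/2}.
\]
Covering $\operatorname{supp}\psi$ by balls of radius $\sqrt t$ and using the uniform Carleson bound controls the last factor by $C_\psi'$ uniformly in $k$ for $t\le 1$, say. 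This is also the route of Proposition~\ref{prop:ql_apriori}\ref{it:ql1}. The bound gives a modulus $\omega(t)\to0$ independent of $k$. Passing $k\to\infty$ for fixed $t>0$, using locally uniform convergence of $u_k(t)$ and $u_{0,k}\to u_0$ in $\L^1_\loc$, yields $|\langle u(t)-u_0,\psi\rangle|\le\omega(t)$, which proves the trace. The delicate points are the range construction in the first step, so that Theorem~\ref{thm:main} applies with a $k$-uniform compact set, and the uniformity of the trace estimate. I expect the former to require the most care: the simplest fix is composing the mollification with a Lipschitz retraction of $\R$ onto a compact neighbourhood of $K$ in $O$ that fixes $K$, which gives a.e. convergence to $u_0$.
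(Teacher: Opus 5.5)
\textbf{Gap in the approximation step.} The retraction you propose cannot exist where it is needed. A continuous $\Phi\colon\R\to K_\delta$ with $\Phi=\Id$ on $K$ has a connected image containing $K$, hence containing $\conv(K)$. So $\Phi$ exists only if $\conv(K)\subseteq O$, and in that case it is superfluous, since $\rho_k*u_0$ already takes values in the compact interval $\conv(K)\subseteq O$.

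Every other variant meets the same obstruction. The closure of the range of any $u_{0,k}\in\BUC(\R^n)$ is an interval. A.e.\ convergence $u_{0,k}\to u_0$ forces this interval to come arbitrarily close to $\min K$ and $\max K$, so it eventually meets $\conv(K)\setminus O$ whenever that set is non-empty.

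In fact no argument can succeed there, because the statement fails when $\conv(\overline{\vphantom{t}\Ran}(u_0))\not\subseteq O$. Take $a\equiv\Id$, $O=\R\setminus\{1\}$ and $u_0=2\cdot\ind_{\{x_1>0\}}$, so that $\overline{\vphantom{t}\Ran}(u_0)=\{0,2\}\subseteq O$. Any bounded weak solution is a bounded weak solution of the heat equation with trace $u_0$. By Proposition~\ref{prop:apriori}\ref{it:Loo} and~\ref{it:uniqueness} it therefore equals $\e^{t\Delta}u_0=1+\mathrm{erf}\bigl(x_1/(2\sqrt t)\bigr)$. Its essential range is $[0,2]\ni 1$, so it is not $O$-valued in the sense of Definition~\ref{def:weak_ql}. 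In general, Remark~\ref{rem:ql_apriori} together with interior continuity forces $\overline{\vphantom{t}\Ran}(u)=\conv(\overline{\vphantom{t}\Ran}(u_0))$.

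\textbf{What is missing, and how the rest compares.} The missing ingredient is the extra hypothesis $\conv(\overline{\vphantom{t}\Ran}(u_0))\subseteq O$. It is automatic when $O$ is an interval such as $(0,\infty)$.

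The paper's proof relies on this hypothesis tacitly. It claims that $\overline{\Ran(u_0*\Phi_\eps)}$ lies in a fixed compact $K\subseteq O$, justified by analogy with Lemma~\ref{lem:fp_O_valued}. But mollifiers of discontinuous data do not converge uniformly, and the claim holds (for small $\eps$) precisely under this hypothesis, with $K=\conv(\overline{\vphantom{t}\Ran}(u_0))$.

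Granting the hypothesis, drop the retraction and take $K'=\conv(\overline{\vphantom{t}\Ran}(u_0))$. The rest of your proof is then correct and differs from the paper's only in technique:
\begin{itemize}
\item You obtain locally uniform convergence on $(0,\infty)\times\R^n$ from De Giorgi--Nash--Moser and Arzel\`a--Ascoli. You then identify the trace via the $k$-uniform bound $|\langle u_k(t)-u_{0,k},\psi\rangle|\lesssim_\psi\sqrt t$ coming from the Carleson estimate.
\item The paper uses the Aubin--Lions lemma to get strong $\L^2_\loc$ and a.e.\ convergence. It then passes to the limit in the weak formulation tested up to $t=0$ via Lemma~\ref{lem:Lions_cont}.
\end{itemize}
Your trace argument is more direct; the paper's compactness step avoids H\"older theory.
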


	\begin{remark}[Discussion of the corollary]
		Compared to Theorem~\ref{thm:main}, we loose uniqueness, $\Z$-space regularity of the gradient and stability as $t\to 0$.
		Stability as $t \to \infty$ and the Carleson measure estimate for the gradient are still valid, and so is the range condition for bounded data discussed in Remark~\ref{rem:ql_apriori}. Existence can be proved under weaker assumptions on $a$ as well. This can be done by regularization of $a$ and a compactness argument as for Corollary~\ref{cor:Loo}. Besides Assumption~\ref{ass:a}\ref{it:a1}, it suffices that for almost every $(t,x) \in [0,\infty) \times \R^n$, $a(t,x,\cdot)$ is continuous on $O$, and that $a \in \L^\infty([0,T] \times \R^n \times K)$ for all $T \in (0,\infty)$ and compact $K \subseteq O$.
	\end{remark}

	\begin{remark}[Hölder regularity]
		Let $u$ be any bounded weak solution as constructed in Corollary~\ref{cor:Loo}.
		By Assumption~\ref{ass:a}\ref{it:a1}, $A(t,x) \coloneqq a(t,x,u(t,x))$ is uniformly elliptic on $(0,\infty) \times \R^n$. By Nash's local regularity~\cite{Nash}, $u \in\Cont^\alpha([\delta', \infty) \times \R^n)$ for some $\alpha \in (0,1)$ and all $\delta' \in (0,\infty)$. In particular, $u \in \BUC([\delta', \infty) \times \R^n)$, and therefore $A$ is in addition uniformly continuous on $[\delta',T] \times \R^n$ for all $T \in (\delta', \infty)$ by Assumption~\ref{ass:a}\ref{it:a4}. So, classical parabolic regularity theory (see~\cite[Ch.\,IV, \SS9]{LSU}) provides us with $u \in\Cont^\alpha([\delta, T] \times \R^n)$ for all $\alpha \in (0,1)$, $T \in (\delta',\infty)$ and $\delta \in (\delta',T)$.
		This remark applies likewise to the bounded weak solution constructed in Theorem~\ref{thm:main}.
	\end{remark}

	\subsection*{Open problems}

	We conclude with some open problems:

	\begin{itemize}
		\item (Systems) Our Proposition~\ref{prop:cp_wp} remains valid for systems, hence it would be interesting to look at quasilinear systems with our approach.
		\item (Uniqueness) For irregular initial data such as $u_0 \in \L^\infty(\R^n)$, it would be interesting to investigate suitable conditions ensuring uniqueness of bounded weak solutions. The issue is to have sufficient a priori regularity near the initial time.
		\item (Existence) It would be interesting to investigate whether the existence result in Corollary~\ref{cor:Loo} can be extended to more irregular data such as $u_0 \in \VMO(\R^n)$.
	\end{itemize}

	\subsection*{Notation}
	\label{subsec:not}

	We use generic constants $C \in (0,\infty)$ as well as the Vinogradov notation $\lesssim$, $\gtrsim$ and $\approx$. Generic constants are allowed to depend on the dimension $n$ without further mentioning.
	To emphasize dependence on a parameter $P$, we may write $C=C(P)$.
	For $p \in (1,n+2)$ define the upper parabolic Sobolev conjugate $p^* \in (p,\infty)$ by
	$$\frac{1}{p^*} = \frac{1}{p} - \frac{1}{n+2},$$
	and for $q \in (1, \infty)$ define the lower parabolic Sobolev conjugate $q_* \in (0,q)$ by
	$$\frac{1}{q_*} = \frac{1}{q} + \frac{1}{n+2}.$$
	For $x \in \R^n$ and $r \in (0,\infty)$, we write $\B(x,r)$ for the open Euclidean ball of radius $r$ around~$x$.
	We write $\conv(A)$ for the convex hull of a set $A \subseteq \R$.
	The space $\BUC(\R^n)$ consists of all bounded and uniformly continuous real-valued functions on $\R^n$ and is equipped with the uniform norm $\| \cdot \|_\infty$.
	In particular, $\BUC(\R^n)$ is a closed subspace of $\L^\infty(\R^n)$.
	We use the notation $\| \cdot \|_\infty$ also for the essential supremum norm on $\L^\infty(\R^n)$.
	For a measurable, real-valued function $f$, let $\overline{\vphantom{t}\Ran}(f)$ denote the essential range of $f$. The essential range is always closed. When $f$ is continuous, then $\overline{\vphantom{t}\Ran}(f) = \overline{\Ran(f)}$.
	If $X$ is any function space and $F$ is an $\R^n$-valued vector field, we still write $F \in X$ instead of $F \in X^n$ to simplify the notation. For a set $A$ of positive measure, we write $\fint_A$ for the average integral $\tfrac{1}{|A|} \int_A$.

	\subsection*{Acknowledgments}
	\label{subsec:ack}

	The second-named author thanks the Australian National University for its hospitality during a research visit in February 2026. Both authors are grateful to Pierre Portal for valuable discussions.

	\section{Preliminaries on function spaces}
	\label{sec:preliminaries}

	\begin{definition}[$\Z$-spaces]
		\label{def:Z}
		Let $q \in (1,\infty]$ and $T \in (0,\infty]$. The space $\Z^{\infty,q}_{-\nicefrac{1}{2}}(T)$ consists of all measurable functions $f \colon (0,T) \times \R^n \to \R$ such that
		\begin{align}
			\| f \|_{\Z^{\infty,q}_{-\frac{1}{2}}(T)} \coloneqq \sup_{x\in \R^n} \sup_{t \in (0,T)} \bigg( \fint_{t/2}^t \fint_{\B(x, \sqrt{t})} |s^\frac{1}{2} f(s,y)|^q \d y \d s \bigg)^\frac{1}{q} < \infty,
		\end{align}
		with the obvious modification if $q=\infty$.
	\end{definition}

	For $\beta \in \R$ and $T\in (0,\infty]$, define the space $\L^\infty_\beta(T)$ as the collection of all measurable functions $f \colon (0,T) \times \R^n \to \R$ such that
	\begin{align}
		\| f \|_{\L^\infty_\beta(T)} \coloneqq \esssup_{(t,x) \in (0,T) \times \R^n} |t^{-\beta} f(t,x)| < \infty.
	\end{align}
	Clearly, $\Z^{\infty,\infty}_{-\nicefrac{1}{2}}(T) = \L^\infty_{-\nicefrac{1}{2}}(T)$.
	Note that, by Jensen's inequality, for every $q_0, q_1 \in (1,\infty]$ with $q_0 \geq q_1$ there holds the contractive inclusion $\Z^{\infty,q_0}_{-\nicefrac{1}{2}}(T) \subseteq \Z^{\infty,q_1}_{-\nicefrac{1}{2}}(T)$. We call this inclusion the \emph{nesting property} of weighted $\Z$-spaces. In particular, $\L^\infty_{-\nicefrac{1}{2}}(T)$ is contained in $\Z^{\infty,q}_{-\nicefrac{1}{2}}(T)$ for any $q \in (1,\infty]$.
	For brevity, we put $\L^\infty(T) \coloneqq \L^\infty_0(T)$ and we simply write $\L^\infty_\beta$ when $T = \infty$.

	\begin{remark}
		\label{rem:Z_not_L2}
		Let $q \in (1,\infty]$ and $T \in (0,\infty]$. The space $\Z^{\infty,q}_{-\nicefrac{1}{2}}(T)$ is not contained in $\L^2(0,T; \L^2_\loc(\R^n))$. Indeed, this is clear for $q < 2$, and the function $f(s,y) \coloneqq s^{-\nicefrac{1}{2}} \ind_B(y)$ serves as a counter-example when $q \geq 2$, where $B$ denotes the unit ball.
	\end{remark}

	\section{A priori properties for non-autonomous linear PDEs}
	\label{sec:non-autonomous}

	Fix $T \in (0, \infty]$ and $A \colon (0,T) \times \R^n \to \R^{n\times n}$ measurable, bounded and uniformly elliptic throughout the section.
	Here and in the sequel, we say that $A$ is \emph{uniformly elliptic} if there exists $\lambda \in (0,\infty)$ such that
	\begin{align}
		A(t,x)\xi \cdot \xi \geq \lambda |\xi|^2, \quad t \in (0,T), x\in \R^n, \xi \in \R^n.
	\end{align}
	We call $\| A \|_{\L^\infty(T)}$ and the largest possible value of $\lambda$ the \emph{coefficient bounds} of $A$.
	The goal of this section is to establish a priori properties for solutions to the \emph{non-autonomous} homogeneous problem
	\begin{align}
		\label{eq:CP_NAT}
		\tag{NA}
		\partial_t u -\Div(A\nabla u) = 0.
	\end{align}

	The following is our definition of a weak solution. As a preparation for Section~\ref{sec:homogeneous}, we also include a non-trivial source term.

	\begin{definition}[Weak solutions]
		\label{def:weak_solution}
		Given $F \in \L^2_\loc((0,T) \times \R^n)$, call $u\colon (0,T) \times \R^n \to \R$ a \emph{weak solution} to
		\begin{align}
			\label{eq:def_weak_eqn}
			\partial_t u -\Div(A\nabla u) = \Div(F),
		\end{align}
		if $u,\nabla u \in \L^2_\loc((0,T) \times \R^n)$ and $u$ satisfies the equation weakly:
			\begin{align}
				\iint_{(0,T) \times \R^n} u (-\partial_t \phi) + (A\nabla u + F) \cdot \nabla \phi \d x \d t = 0, \quad \phi \in \Cont_\cc^\infty((0,T) \times \R^n).
			\end{align}
	\end{definition}

	The following lemma is a consequence of Lions' embedding, see~\cite{Lions57}, assuming square integrability of the gradient up to the initial time.

	\begin{lemma}[Regularity and integral identity]
		\label{lem:Lions_cont}
		For $F \in \L^2(0,T; \L^2_\loc(\R^n))$, let $u \colon (0,T) \times \R^n \to \R$ be a weak solution to $$\partial_t u -\Div(A\nabla u) = \Div(F).$$
		Let $T' \in (0,T] \cap (0,\infty)$ and assume $\nabla u \in \L^2(0,T'; \L^2_\loc(\R^n))$.
		Then, $u \in \Cont([0,T']; \L^2_\loc(\R^n))$ and for all $\phi \in \Cont^\infty_\cc([0,T'] \times \R^n)$ there holds
		\begin{align}
			\langle u(0), \phi(0) \rangle = \langle u(T'), \phi(T') \rangle + \iint_{(0,T') \times \R^n} u(-\partial_t \phi) + (A\nabla u + F)\cdot \nabla \phi \d x \d t.
		\end{align}
	\end{lemma}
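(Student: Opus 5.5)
The plan is to localize in space and then apply Lions' embedding theorem on a fixed bounded set. Fix $\chi \in \Cont_\cc^\infty(\R^n)$ and set $v \coloneqq \chi u$. A direct computation with test functions $\chi\phi$ shows that $v$ solves, in the sense of distributions on $(0,T') \times \R^n$,
\begin{align}
	\partial_t v = \Div(\chi A\nabla u + \chi F) - \nabla\chi\cdot (A\nabla u + F).
\end{align}
Let $B$ be a ball containing $\operatorname{supp}\chi$. The first step is to check the integrability needed for Lions' setting $V = \H^1_0(B)$, $H=\L^2(B)$, $V' = \H^{-1}(B)$.

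Since $\nabla u, F \in \L^2(0,T';\L^2(B))$ and $A$ is bounded, the right-hand side above lies in $\L^2(0,T';\H^{-1}(B))$. For $v$ itself we only know $u\in\L^2_\loc$ in the interior, so we need control of $u$ in $\L^2(0,T';\L^2(B))$ up to $t=0$. I would obtain this from the equation, as follows. For a.e. $t_0 \in (0,T')$, the function $t\mapsto \langle u(t),\psi\rangle$ with $\psi\in\Cont_\cc^\infty$ is absolutely continuous on $(0,T')$, with derivative bounded in $\L^1$ by $\|A\nabla u+F\|_{\L^2(0,T';\L^2)}\|\nabla\psi\|_2$. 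A Poincaré-type argument then applies: write $u(t) - (u(t))_B$, controlled by $\nabla u(t)$, plus the mean $(u(t))_B$ tested against a bump, whose time variation is integrable. This gives $u \in \L^\infty(0,T';\L^2(B))$, hence $v\in \L^2(0,T';\H^1_0(B))$ with $\partial_t v \in \L^2(0,T';\H^{-1}(B))$. I expect this step, controlling $u$ up to $t=0$ when a priori only $u\in\L^2_\loc((0,T)\times\R^n)$ is known, to be the main obstacle. It is handled by the mean-value and Poincaré argument above, combined with the local $\L^2$ bound at some interior time $t_0$.

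Lions' embedding then gives $v\in\Cont([0,T'];\L^2(B))$. This defines $u(0)$ consistently on $B$, and since $\chi$ and $B$ are arbitrary, we obtain $u \in \Cont([0,T'];\L^2_\loc(\R^n))$.

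For the identity, take $\phi\in\Cont^\infty_\cc([0,T']\times\R^n)$ and $\chi\equiv1$ on a neighborhood of the spatial support of $\phi$. By the product rule for $\frac{d}{dt}\langle v,\phi\rangle$, valid for $v \in \L^2(V)\cap \H^1(V')$ and $\phi$ smooth (also part of Lions' theory), we get
\begin{align}
	\langle u(T'),\phi(T')\rangle - \langle u(0),\phi(0)\rangle = \int_0^{T'} \langle \partial_t v,\phi\rangle + \langle v,\partial_t\phi\rangle \d t.
\end{align}
Substituting $\langle\partial_t v,\phi\rangle = -\int (A\nabla u+F)\cdot\nabla\phi\d x$, where the $\nabla\chi$ terms vanish on the support of $\phi$, yields the claimed identity after rearranging.
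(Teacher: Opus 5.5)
Your proposal is correct and follows the paper's route: the paper only cites Lions' embedding, and your localization with $V=\H^1_0(B)$, together with the Poincaré-plus-bump argument giving $u\in\L^2(0,T';\L^2(B))$ up to $t=0$, supplies the detail the paper leaves implicit. One small slip: that argument gives $u\in\L^2(0,T';\L^2(B))$, not $\L^\infty(0,T';\L^2(B))$, because the gradient term is only square integrable in time; still, $\L^2$ is all Lions' theorem needs, and the $\L^\infty$ bound then follows from the continuity you obtain.
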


	\begin{remark}[Initial conditions]
		\label{rem:initial_conditions}
		For every $u_0 \in \L^2_\loc(\R^n)$, Lemma~\ref{lem:Lions_cont} allows us to complement~\eqref{eq:def_weak_eqn} by an initial condition $u(0) = u_0$ provided we assume square integrability of $\nabla u$ up to the initial time.
		This condition is satisfied for bounded weak solutions when $F = 0$.
	\end{remark}

	The following proposition provides us with a priori properties for bounded weak solutions to the homogeneous Cauchy problem~\eqref{eq:CP_NAT}.

	\begin{proposition}[A priori properties]
		\label{prop:apriori}
		Let $u \colon (0,T) \times \R^n \to \R$ be a bounded weak solution to~\eqref{eq:CP_NAT}.

		The following hold:
		\begin{enumerate}[label=(\alph*)]
			\item\label{it:Carleson} There exists a finite constant $C$ depending on the coefficient bounds of $A$, such that we have the Carleson measure estimate
			\begin{align}
				\sup_{x\in \R^n} \sup_{t \in (0,T]} \bigg( \int_0^t \fint_{\B(x,\sqrt{t})} |\nabla u(s,y)|^2 \d y \d s \bigg)^\frac{1}{2} \leq C \| u \|_{\L^\infty(T)}.
			\end{align}
			\item\label{it:Loo} There exists $u(0) \in \L^\infty(\R^n)$ such that $u(t) \to u(0)$ in $\L^2_\loc(\R^n)$ as $t \to 0$.
			\item\label{it:uniqueness} If $u(0) = 0$, then $u = 0$.
			\item\label{it:range}\label{it:mp} There holds $\conv(\overline{\vphantom{t}\Ran}(u)) = \conv(\overline{\vphantom{t}\Ran}(u(0)))$. In particular,
			\begin{align}
				\| u \|_{\L^\infty(T)} = \| u(0) \|_\infty.
			\end{align}
			\item\label{it:buc} If $u(0) \in \BUC(\R^n)$, then $u$ is uniformly continuous on $[0,T) \times \R^n$. The modulus of continuity of $u$ depends on the modulus of continuity of $u(0)$, $\| u(0) \|_\infty$ and the coefficient bounds of $A$. In particular, if $T < \infty$, we have $u(T) \in \BUC(\R^n)$.
			\item\label{it:buc_grad} If $A$ is uniformly continuous, then for all $T' \in (0,T] \cap (0,\infty)$ and $q\in (1,\infty)$ we have $\nabla u \in \Z^{\infty,q}_{-\nicefrac{1}{2}}(T')$.
			Moreover, there exists a finite constant $C(T')$ non-decreasing in $T'$ and depending on $q$, and the modulus of continuity and coefficient bounds of $A$, such that
			\begin{align}
				\| \nabla u \|_{\Z^{\infty,q}_{-\frac{1}{2}}(T')} \leq C(T') \| u(0) \|_\infty.
			\end{align}
		\end{enumerate}
	\end{proposition}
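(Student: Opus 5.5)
I would prove the six items in order, with Caccioppoli-type energy estimates as the basic tool. For a bounded weak solution I also use De Giorgi--Nash--Moser theory for the operator $\partial_t - \Div(A\nabla\cdot)$, which depends only on the coefficient bounds of $A$.

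\emph{Step 1: Carleson estimate \ref{it:Carleson}.} Fix $x$, $t$ and $0<\varepsilon<t$. On $(\varepsilon,t)$ the equation is interior, so I may test with $u\chi^2$, where $\chi$ is a spatial cutoff equal to $1$ on $\B(x,\sqrt t)$, supported in $\B(x,2\sqrt t)$, with $|\nabla\chi|\lesssim t^{-1/2}$. After a Steklov regularisation in time this gives
\begin{align}
\lambda\int_\varepsilon^t\!\!\int |\nabla u|^2\chi^2 \le \tfrac12\int u(\varepsilon)^2\chi^2 + C\int_\varepsilon^t\!\!\int u^2|\nabla\chi|^2 .
\end{align}
The right-hand side is bounded by $C\|u\|_\infty^2 t^{n/2}$, uniformly in $\varepsilon$. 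Letting $\varepsilon\to0$ and dividing by $|\B(x,\sqrt t)|$ gives \ref{it:Carleson}. In particular $\nabla u\in\L^2(0,T';\L^2_\loc)$ for every finite $T'$.

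\emph{Step 2: trace \ref{it:Loo} and uniqueness \ref{it:uniqueness}.} By Step 1, Lemma~\ref{lem:Lions_cont} applies with $F=0$. Hence $u\in\Cont([0,T'];\L^2_\loc)$ and the limit $u(0)$ exists. It is bounded by $\|u\|_\infty$, as a weak-$*$ limit in $\L^\infty$.

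For uniqueness, the $\L^2$ energy method fails because $u$ need not be integrable on $\R^n$. I would instead use a weighted energy estimate. Test with $u\,\e^{-2\varepsilon\langle y\rangle}$, or with a Gaussian weight $\e^{-|y|^2/(\kappa(\tau-s))}$ on a short time interval. The gradient of the weight costs $\varepsilon|u|\,|\nabla u|$, which is absorbed via Young's inequality and Gronwall's inequality. This yields
\begin{align}
\int u(t)^2 \e^{-2\varepsilon\langle y\rangle}\,dy \le \e^{C\varepsilon^2 t}\int u(0)^2\e^{-2\varepsilon\langle y\rangle}\,dy = 0 .
\end{align}
Justifying the limit $t\to0$ uses Lemma~\ref{lem:Lions_cont} together with truncation of the weight.

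\emph{Step 3: maximum principle and range \ref{it:range}.} Let $M=\esssup u(0)$. Apply the same weighted argument to $(u-M)^+$, which is a subsolution with zero initial trace, tested against $(u-M)^+\e^{-2\varepsilon\langle y\rangle}$. This shows $u\le M$, and similarly $u\ge \operatorname{ess\,inf} u(0)$. Conversely, $\overline{\vphantom{t}\Ran}(u(0))\subseteq \conv\overline{\vphantom{t}\Ran}(u)$ holds because $u(t)\to u(0)$ in $\L^2_\loc$, so a subsequence converges almost everywhere.

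\emph{Step 4: uniform continuity \ref{it:buc}.} I expect this to be the main obstacle, since continuity must be controlled down to $t=0$.

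Interior regularity comes from Nash's theorem: $u$ is $\Cont^\alpha$ on $[\delta,T)\times\R^n$ with constant $\lesssim \delta^{-\alpha/2}\|u\|_\infty$.

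Near $t=0$ I would combine two estimates.
\begin{itemize}
\item \emph{Spatial translation.} The function $u(\cdot,\cdot+h)$ solves the equation with the translated coefficients $A(\cdot,\cdot+h)$, not with $A$. So the translation trick is not available. I would instead compare $u$ with a mollification of $u(0)$ at scale $\rho$ and use a localized maximum principle.
\item \emph{Time continuity near $0$.} For $|y-x|\le R$, the function $v=u-u_0(x)$ has $|v(0)|\le\omega(R)$ on $\B(x,R)$, where $\omega$ is the modulus of continuity of $u_0$. A local $\L^\infty$ bound for solutions with small initial data on a ball, obtained from Moser iteration up to the initial time or from Aronson's Gaussian bounds, gives
\begin{align}
|v(t,x)|\le \omega(R)+C\|u_0\|_\infty \e^{-cR^2/t}.
\end{align}
\end{itemize}
Choosing $R$ and then $t$ small yields equicontinuity at $t=0$. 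Patching this with the interior Hölder estimate, taking $\delta$ depending on the desired $\varepsilon$, gives the claimed modulus of continuity.

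\emph{Step 5: $\Z$-space gradient bound \ref{it:buc_grad}.} Fix $(t,x)$ and rescale parabolically to unit scale; the rescaled coefficient has the same ellipticity and a no worse modulus of continuity. Apply interior $\W^{1,q}$ estimates for equations with $\VMO$ (here uniformly continuous) coefficients, for instance via Krylov or Byun--Wang freezing arguments, on the Whitney region $(t/2,t)\times\B(x,\sqrt t)$ enlarged to $(t/4,t)\times\B(x,2\sqrt t)$. This bounds the $\L^q$ average of $\sqrt s\,\nabla u$ by $\|u\|_\infty$.

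The constant depends on the modulus of continuity at scale $\sqrt t$. Since $t\le T'$, this modulus is at scale at most $\sqrt{T'}$, which gives the monotone constant $C(T')$. Taking the supremum over $(t,x)$ concludes the proof, using \ref{it:range} to replace $\|u\|_\infty$ by $\|u(0)\|_\infty$.
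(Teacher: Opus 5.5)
Your proposal is correct. For \ref{it:Carleson}, \ref{it:Loo} and \ref{it:buc_grad} it follows the paper's argument:
\begin{itemize}
\item Step~1 is the Caccioppoli-plus-Fatou proof from the appendix.
\item The trace in Step~2 is exactly the Lions-lemma argument.
\item Step~5 is the paper's parabolic rescaling. You invoke a black-box interior $W^{1,q}$ estimate for $\VMO$ coefficients, where the paper proves one by the explicit bootstrap $r_*\to r$ via \cite[Prop.~4.1]{DEK}, starting from Caccioppoli.
\end{itemize}

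The genuine differences are in \ref{it:uniqueness}--\ref{it:buc}.

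For \ref{it:uniqueness} and \ref{it:range}, the paper cites Aronson's uniqueness theorem in the class $\mathcal{E}^2$. It then derives the representation $u(t,x)=\int\Gamma(t,x,0,y)u(0)(y)\d y$ and reads off the range property from $\Gamma\ge 0$ and $\int\Gamma=1$. You instead prove uniqueness and the maximum principle with an exponentially weighted energy estimate, testing with $u\,\e^{-2\eps\langle y\rangle}$ and $(u-M)^+\e^{-2\eps\langle y\rangle}$. This is legitimate here: the Carleson bound gives $\sup_x\int_0^{T'}\int_{\B(x,1)}|\nabla u|^2<\infty$, so all weighted integrals are finite and truncating the weight costs nothing. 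Your route is more elementary and self-contained, and tailored to bounded solutions, which is all that is needed. Aronson's route buys the representation formula, which the paper reuses later, for instance for the large-time limit in Theorem~\ref{thm:main}.

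For \ref{it:buc}, the paper invokes Trudinger's continuity up to $t=0$ (boundary Harnack), localized on unit balls. You combine Nash's interior estimate on $[\delta,T)\times\R^n$ with the Gaussian bound $\sup_x|u(t,x)-u(0)(x)|\le\omega(R)+C\|u(0)\|_\infty\e^{-cR^2/t}$. This is more explicit and quantitative, and it directly yields $u(t)\to u(0)$ in $\L^\infty$.

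Two remarks on Step~4:
\begin{itemize}
\item It still relies on Aronson's fundamental solution and on the representation formula, which follows from your Step~2 uniqueness. The Moser-up-to-$t=0$ alternative you mention is only gestured at, not carried out.
\item Your bullet about comparing with a mollification of $u(0)$ is unnecessary. For $t,s<2\delta$, the inequality $|u(t,x)-u(s,y)|\le|u(t,x)-u(0)(x)|+\omega(|x-y|)+|u(0)(y)-u(s,y)|$, combined with the uniform estimate above, already controls continuity near $t=0$. Nash's estimate handles $t,s\ge\delta$, and pairs straddling these regions have $|t-s|\ge\delta$.
\end{itemize}
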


	\begin{proof}
		Part~\ref{it:Carleson} is a special case of~\cite[Thm.~7.4]{AMP19}, taking its localization to finite time intervals (explained in~\cite[Sec.~10]{AMP19}) into account. We also provide an elementary proof of this fact in Proposition~\ref{prop:carleson}.

		For part~\ref{it:Loo}, note that $\nabla u \in \L^2(0,T'; \L^2_\loc(\R^n))$ for all $T' \in (0,T] \cap (0, \infty)$ by~\ref{it:Carleson}. Hence, the existence of a trace $u(0) \in \L^\infty(\R^n)$ follows from Lemma~\ref{lem:Lions_cont} and extraction of a subsequence $(u(t_k))_k$ converging almost everywhere to $u(0)$.

		Next, part~\ref{it:uniqueness} follows from~\cite[Thm.~2]{Aronson} using that the class $\mathcal{E}^2((0,T') \times \R^n)$ contains $\L^\infty(T')$ for all $T' \in (0,T] \cap (0,\infty)$.

		As a preparation to obtain~\ref{it:range}, we recall the generalized fundamental solution $\Gamma(t,x,s,y)$ to~\eqref{eq:CP_NAT} constructed by Aronson in~\cite{Aronson}. It satisfies for some $c_0,c_1,C_0,C_1 \in (0,\infty)$ the Gaussian lower and upper bounds
		\begin{align}
			\label{eq:Gaussian}
			C_0 (t-s)^{-\frac{n}{2}} \e^{-c_0\frac{|x-y|^2}{t-s}} \leq \Gamma(t,x,s,y) \leq C_1 (t-s)^{-\frac{n}{2}} \e^{-c_1\frac{|x-y|^2}{t-s}}.
		\end{align}

		Now, we prove~\ref{it:range}. By~\ref{it:Loo}, $\overline{\vphantom{t}\Ran}(u(0))$ is compact.
		Since the convex hull of any non-empty compact set of real numbers is a compact interval, we can write $\conv(\overline{\vphantom{t}\Ran}(u(0))) = [a,b]$ for some $a, b \in \R$ with $a \leq b$.
		First, to see $\conv(\overline{\vphantom{t}\Ran}(u)) \subseteq \conv(\overline{\vphantom{t}\Ran}(u(0)))$, we show $a \leq u \leq b$ on $(0,T) \times \R^n$ almost everywhere.
		To this end, we use the representation
		\begin{align}
			u(t,x) = \int_{\R^n} \Gamma(t,x,0,y) u(0)(y) \d y,
		\end{align}
		which follows from~\ref{it:uniqueness} as its right-hand side is also a bounded weak solution to~\eqref{eq:CP_NAT} with the same initial condition as $u$. Since we have $a\le u_{0}\le b$ almost everywhere, $\Gamma\ge 0$ and $\int_{\R^n} \Gamma(t,x,0,y) \d y = 1$, we obtain $a \leq u \leq b$ on $(0,T) \times \R^n$ almost everywhere.
				Second, the converse inclusion $\conv(\overline{\vphantom{t}\Ran}(u(0))) \subseteq \conv(\overline{\vphantom{t}\Ran}(u))$ follows from the convergence of $u(t)$ to $u(0)$ in $\L^2_\loc(\R^n)$ proved in~\ref{it:Loo}.

		We proceed with~\ref{it:buc}. For weak solutions to parabolic equations with uniformly elliptic coefficients on a bounded parabolic cylinder $(0,T') \times \Omega$, $T' \in (0,T] \cap (0,\infty)$, the result is stated in~\cite[Thm.~D]{Aronson}, relying on the boundary Harnack principle from~\cite{Trudinger}. In fact, the result extends to parabolic equations in $(0,T)\times \R^n$. Indeed, for every $x\in \R^n$, we can apply~\cite[Thm.~4.2]{Trudinger} locally on $(0,T') \times \B(x,1)$ with parabolic cylinders staying away from the lateral boundary $(0,T')\times \partial\B(x,1)$. Since the bound obtained in~\cite[Thm.~4.2]{Trudinger} is independent of $T'$ and $\Omega$, we can control the oscillation uniformly over $(0,T) \times \R^n$. Moreover, this extends to uniform continuity on $[0,T] \times \R^n$ when $T$ is finite, so $u(T) \in \BUC(\R^n)$.

		Finally, we show~\ref{it:buc_grad}.
		By the nesting property of weighted $\Z$-spaces, we can assume $q \geq 2$.
		For $x\in \R^n$ and $t \in (0,T')$, we have to show
		\begin{align}
			\bigg( \fint_{t/2}^t \fint_{\B(x,\sqrt{t})} |s^{\frac{1}{2}} \nabla u|^q \d y \d s \bigg)^\frac{1}{q} \lesssim \| u(0) \|_\infty.
		\end{align}
		By a translation and scaling argument, it suffices to show
		\begin{align}
			\label{eq:buc_grad_claim_for_v}
			\bigg( \int_{1/2}^1 \int_{\B(0,1)} |\nabla v|^q \d y \d s \bigg)^\frac{1}{q} \lesssim \| v(0) \|_\infty,
		\end{align}
		where $v \colon (0,1) \times \R^n \to \R$ is a bounded weak solution to
		\begin{align}
			\label{eq:buc_grad_aux}
			\partial_t v - \Div(\hat A \nabla v) = 0, \quad v(0) = v_0.
		\end{align}
		Here, $\hat A \colon (0,1) \times \R^n \to \R^{n\times n}$ is bounded, uniformly elliptic and uniformly continuous, and $v(0) \in \L^\infty(\R^n)$. The modulus of continuity of $\hat A$ depends on the modulus of continuity of $A$ as well as of $T'$ in a non-decreasing way.
		To prove~\eqref{eq:buc_grad_claim_for_v}, we shall show for all $r \in (1^*, \infty)$ and $0 < c' < c < 1 < d < d' < \infty$ the inequality
		\begin{align}
			\label{eq:buc_grad_iteration}
			\bigg( \int_c^1 \int_{\B(0,d)} |\nabla v|^r \d y \d s \bigg)^\frac{1}{r} \lesssim \bigg( \int_{c'}^1 \int_{\B(0,d')} |\nabla v|^{r_*} \d y \d s \bigg)^\frac{1}{r_*} + \| v \|_{\L^\infty(1)},
		\end{align}
		where the implicit constant depends on the modulus of continuity and the coefficient bounds of $\hat A$, $r$, $c$, $c'$, $d$ and $d'$. Iterating this inequality from $r_*$ instead of $r$ and so on allows us to reach an exponent $p\in (1,2]$ instead of $r_*$ on the right-hand side of~\eqref{eq:buc_grad_iteration} in finitely many steps. Then, using Caccioppoli's inequality (see~\cite[Prop.~3.2]{AMP19}) and part~\ref{it:mp} applied to $v$, we have
		\begin{align}
			\bigg( \int_{c'}^1 \int_{\B(0,d')} |\nabla v|^p \d y \d s \bigg)^\frac{1}{p} &\lesssim \bigg( \int_{c'}^1 \int_{\B(0,d')} |\nabla v|^2 \d y \d s \bigg)^\frac{1}{2} \\
			 &\lesssim \bigg( \int_{c'/2}^1 \int_{\B(0,2d')} |v|^2 \d y \d s \bigg)^\frac{1}{2}
			 \lesssim \| v \|_{\L^\infty(1)} \leq \| v(0) \|_\infty.
		\end{align}
		It remains to show~\eqref{eq:buc_grad_iteration}.
		Let $\psi \in \Cont_\cc^\infty((c',1] \times \B(0,d'))$ satisfying $\psi = 1$ on $[c,1] \times \overline{\B(0,d)}$. Define $w \coloneqq v\psi$, $f \coloneqq v \partial_t \psi - \hat A \nabla v \cdot \nabla \psi$ and $\boldsymbol{g} \coloneqq v \hat A \nabla \psi$. Then, $w \colon (0,1) \times \B(0,d') \to \R$ is a bounded weak solution (adapting the definition to $(0,1)\times \B(0,d')$) to
		\begin{align}
			\partial_t w - \Div(\hat A \nabla w) = f + \Div(\boldsymbol{g})
		\end{align}
		satisfying $w = 0$ on $\bigl((0,1) \times \partial \B(0,d') \bigr) \cup \bigl( \{0 \} \times \B(0,d') \bigr)$.
		Since $\hat A$ is uniformly continuous, we may follow the proof of~\cite[Prop.~4.1]{DEK} with $\mathcal{P} = \partial_t - \Div(\hat A \nabla)$ (hence, their coefficients $b$, $c$ and $d$ are zero) to obtain
		\begin{align}
			\bigg( \int_0^1 \int_{\B(0,d')} |\nabla w|^r \d y \d s \bigg)^\frac{1}{r} \lesssim \bigg( \int_0^1 \int_{\B(0,d')} |f|^{r_*} \d y \d s \bigg)^\frac{1}{r_*} + \| \boldsymbol{g} \|_{\L^\infty((0,1) \times \B(0, d'))},
		\end{align}
		with implicit constant depending on the modulus of continuity and coefficient bounds of $\hat A$ and $r$.
		Now, using the support of $\psi$ and the definition of $f$ and $\boldsymbol{g}$, we deduce~\eqref{eq:buc_grad_iteration}.
	\end{proof}

	\begin{remark}[Carleson measure estimate]
		The Carleson measure estimate in~\ref{it:Carleson} is scale invariant.
		A particular consequence of it is $\nabla u \in \L^2(0,T;\L^2_\loc(\R^n))$, compare with Remark~\ref{rem:initial_conditions}.
		Another different argument uses the representation by the fundamental solution following from~\ref{it:Loo}.
	\end{remark}

	\begin{remark}[Range property]
		\label{rem:conv_hull}
		If $u(0) \in \BUC(\R^n)$, then by part~\ref{it:buc} we know that both $\overline{\vphantom{t}\Ran}(u) = \overline{\Ran(u)}$ and $\overline{\vphantom{t}\Ran}(u(0)) = \overline{\Ran(u(0))}$ are connected. Therefore, the convex hull in part~\ref{it:range} is superfluous, and the range property reads $\overline{\Ran(u)} = \overline{\Ran(u(0))}$.
	\end{remark}

	\section{Bounded weak solutions to autonomous linear PDEs}
	\label{sec:homogeneous}

	Fix $A_0 \colon \R^n \to \R^{n\times n}$ bounded, uniformly elliptic and uniformly continuous throughout this section. For $u_0 \in \BUC(\R^n)$ and $F\in \Z^{\infty,q}_{-\nicefrac{1}{2}}(T)$, $T \in (0, \infty)$, we investigate the \emph{autonomous} linear problem
	\begin{align}
		\label{eq:CP}
		\tag{A}
		\partial_t u -\Div(A_0\nabla u) = \Div(F), \quad u(0) = u_0.
	\end{align}

	Of course, the definitions and results of Section~\ref{sec:non-autonomous} apply in particular to the autonomous case.
	However, since $F$ is not square integrable up to the initial time here (Remark~\ref{rem:Z_not_L2}), things get slightly different, especially with regard to the initial condition.

	\begin{proposition}[Well-posedness of~\eqref{eq:CP}]
		\label{prop:cp_wp}
		Fix $T \in (0, \infty)$ and $q\in (n+2, \infty)$. Then, for all $u_0 \in \BUC(\R^n)$ and $F \in \Z^{\infty,q}_{-\nicefrac{1}{2}}(T)$ there exists a unique, bounded weak solution $u \colon (0,T) \times \R^n \to \R$ to~\eqref{eq:CP} with $u(t) \to u_0$ in $\mathcal{D}'(\R^n)$. Moreover, $\nabla u \in \Z^{\infty,q}_{-\nicefrac{1}{2}}(T)$ and there is a constant $C(T)$, non-decreasing in $T$ and depending on the modulus of continuity and coefficient bounds of $A_0$ and $q$, such that
		\begin{align}
			\label{eq:wp_fp_est1}
			\| u \|_{\L^\infty(T)} + \| \nabla u \|_{\Z^{\infty,q}_{-\frac{1}{2}}(T)} \leq C(T) \bigl( \| F \|_{\Z^{\infty,q}_{-\frac{1}{2}}(T)} + \| u_0 \|_\infty \bigr).
		\end{align}
		Also, for a function $\theta \colon (0,\infty) \to (0, \infty)$ with $\lim_{t\to 0} \theta(t) = 0$
		there holds
		\begin{align}
			\label{eq:wp_fp_est2}
			\| u - u_0 \|_{\L^\infty(t)} + \| \nabla u \|_{\Z^{\infty,q}_{-\frac{1}{2}}(t)} \leq C(T) \| F \|_{\Z^{\infty,q}_{-\frac{1}{2}}(t)} + \theta(t), \quad t \in (0,T].
		\end{align}
	\end{proposition}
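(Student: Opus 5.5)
By linearity we split $u = u_1 + u_2$, where $u_1$ solves the homogeneous problem with datum $u_0$ and $u_2$ solves the inhomogeneous problem with zero datum. We treat the two pieces separately and prove uniqueness at the end.

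\textbf{The homogeneous part $u_1$.} We take $u_1(t) = \e^{-tL}u_0$, where $L = -\Div(A_0\nabla\cdot)$ and the semigroup acts on $\L^\infty$ through its Gaussian kernel (Aronson). Then $u_1$ is a bounded weak solution. Proposition~\ref{prop:apriori}\ref{it:mp} gives $\|u_1\|_{\L^\infty(T)}\le\|u_0\|_\infty$. Proposition~\ref{prop:apriori}\ref{it:buc_grad}, with $A = A_0$ uniformly continuous, gives $\|\nabla u_1\|_{\Z^{\infty,q}_{-1/2}(T)} \le C(T)\|u_0\|_\infty$. For the small-time estimate~\eqref{eq:wp_fp_est2}, Proposition~\ref{prop:apriori}\ref{it:buc} gives uniform continuity of $u_1$ up to $t=0$, so $\|u_1-u_0\|_{\L^\infty(t)}\to0$.

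The gradient norm over $(0,t)$ also tends to zero. To see this, mollify $u_0$ to get $u_0^\eps$ with $\|u_0-u_0^\eps\|_\infty$ small. The rough part $u_0 - u_0^\eps$ contributes at most $C(T)\|u_0-u_0^\eps\|_\infty$ by the bound above. For the smooth part we use $\nabla \e^{-sL}u_0^\eps$: note that $\e^{-sL}u_0^\eps - u_0^\eps$ solves an equation with source $\Div(A_0\nabla u_0^\eps)$, which lies in $\L^\infty_0$. Alternatively, we apply the local estimate~\eqref{eq:buc_grad_iteration} to $u_1 - u_0^\eps(x)$ on rescaled cylinders, which gives $\sqrt{s}\,|\nabla u_1|$ averages bounded by the local oscillation of $u_1$ on parabolic balls of size $\sqrt{t}$. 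That oscillation tends to $0$ uniformly by uniform continuity. This second route is the cleanest, and we define $\theta(t)$ from the modulus of continuity of $u_1$.

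\textbf{The inhomogeneous part $u_2$.} Set
$$u_2(t) = \int_0^t \e^{-(t-s)L}\Div F(s)\d s.$$
The key estimates are:
\begin{itemize}
\item an $\L^\infty$ bound, from the Gaussian kernel bounds on $\nabla_y\Gamma$;
\item a $\Z^{\infty,q}_{-1/2}$ bound on $\nabla u_2$, which is exactly the maximal-regularity-type estimate in $\Z$-spaces from our previous work~\cite{AB}.
\end{itemize}
For the $\L^\infty$ bound, we bound $|\nabla_y \Gamma(t,x,s,y)|$ in averaged $\L^{q'}$ sense on Whitney boxes, using local $\L^{q'}$ gradient bounds from uniform continuity of $A_0$. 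We then decompose $(0,t)\times\R^n$ into Whitney regions $(s\in(2^{-k-1}t,2^{-k}t))$ together with spatial annuli. On each box, Hölder's inequality against $\|F\|_{\Z}$ gives a factor $s^{-1/2}$ from the weight and $(t-s)^{-1/2}$ from the kernel gradient.

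The condition $q>n+2$ is what makes the sum converge near $s=t$. There the singularity $(t-s)^{-1/2-n/2}$ must be integrated in $\L^{q'}$ over parabolic boxes, which is integrable exactly when $q' < (n+2)/(n+1)$, i.e.\ $q>n+2$. Near $s=0$ the factor $s^{-1/2}$ is integrable.

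These bounds are scale invariant, so restricting to $(0,t)$ gives $\|u_2\|_{\L^\infty(t)}+\|\nabla u_2\|_{\Z(t)}\le C(T)\|F\|_{\Z(t)}$. This is~\eqref{eq:wp_fp_est2} for $u_2$, and in particular $u_2(t)\to0$ uniformly, so the distributional initial condition holds.

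We still need $u_2$ to be a weak solution. We check this first for $F$ truncated away from $t=0$ and smooth, where it is classical, and then pass to the limit using the bounds above.

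\textbf{Uniqueness.} Let $w$ be the difference of two bounded solutions. Then $w$ solves the homogeneous equation with $w(t)\to0$ in $\mathcal{D}'$. Since $w$ is bounded, Proposition~\ref{prop:apriori}\ref{it:Loo} provides a trace $w(0)$, and it must equal $0$. Proposition~\ref{prop:apriori}\ref{it:uniqueness} then gives $w=0$.

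\textbf{Main obstacle.} The main difficulty is the $\L^\infty$ and $\Z$ estimate for $u_2$ near the diagonal $s\approx t$. This needs kernel gradient bounds for merely uniformly continuous $A_0$, and it is where both $q>n+2$ and the local $\L^r$ gradient estimates of Proposition~\ref{prop:apriori}\ref{it:buc_grad} (for the adjoint) enter.
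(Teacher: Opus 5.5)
\textbf{Verdict.} The overall plan matches the paper's: split $u=u_1+u_2$, take the $\Z$-bound for $\nabla u_2$ from \cite{AB}, and get uniqueness from Proposition~\ref{prop:apriori}\ref{it:Loo} and~\ref{it:uniqueness}. Your kernel-based $\L^\infty$ bound for $u_2$ is a plausible substitute for \cite[Prop.~4.3]{AB}, with $q>n+2$ entering near $s=t$ as you say. However, there is a genuine gap: the initial condition for the inhomogeneous part is not proved.

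\textbf{The gap.} From $\|u_2\|_{\L^\infty(t)}\le C(T)\|F\|_{\Z^{\infty,q}_{-1/2}(t)}$ you conclude ``in particular $u_2(t)\to0$ uniformly''. That needs $\|F\|_{\Z^{\infty,q}_{-1/2}(t)}\to0$ as $t\to0$, which fails in general, because the norm is a supremum of scale-invariant averages. Take $F(s,y)=s^{-1/2}g(y/\sqrt s)$ with $g$ bounded. Then $\|F\|_{\Z^{\infty,q}_{-1/2}(t)}$ does not depend on $t$. For $A_0=\Id$, parabolic scaling plus uniqueness give $u_2(t,x)=U(x/\sqrt t)$, so $\|u_2(t)\|_\infty$ is constant in $t$ and nonzero whenever $\Div F\neq0$. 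The paper's remark after the proposition says exactly this: convergence is uniform only when the $\Z$-norm of $F$ on $(0,t)$ tends to $0$. So the requirement $u(t)\to u_0$ in $\mathcal{D}'(\R^n)$, which is part of the existence claim, is left unproved.

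\textbf{How to close it.} The paper takes this limit from \cite[Prop.~4.7]{AB}. A self-contained route is duality:
$\langle u_2(t),\psi\rangle=-\int_0^t\langle F(s),\nabla\e^{-(t-s)L^*}\psi\rangle\d s$, where $L^*$ is the adjoint of $L=-\Div(A_0\nabla\cdot)$. On a slab $s\sim2^{-k}t$ and a unit cube $Q$, the $\Z$-norm bounds the $\L^q$ norm of $F$ by $(2^{-k}t)^{1/q-1/2}$. Pair this with a field that is bounded in $\L^{q'}(Q)$ uniformly in time and decays off-diagonally away from the support of $\psi$; summing over $k$ and $Q$ gives $O(\sqrt t)$. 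That is distributional convergence, not uniform convergence.

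\textbf{Two smaller issues in the same part.}
\begin{itemize}
\item The weak-solution property of $u_2$ cannot come from ``passing to the limit using the bounds above''. Truncations or mollifications of $F$ do not converge to $F$ in $\Z^{\infty,q}_{-1/2}(T)$, since the norm is a supremum over all Whitney boxes. You need local convergence with uniform bounds and an identification of the limit through the Duhamel integral, or \cite[Prop.~4.5]{AB}.
\item Restricting from $T$ to $t$ in \eqref{eq:wp_fp_est2} follows from causality of the Duhamel formula and monotonicity of $C(\cdot)$, not from scale invariance. Constants depending on the modulus of continuity of $A_0$ are not scale invariant, which is why $C(T)$ depends on $T$.
\end{itemize}

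\textbf{What works, and how it differs.} Your treatment of $u_1$ is fine. Your second route to $\lim_{t\to0}\|\nabla u_1\|_{\Z^{\infty,q}_{-1/2}(t)}=0$ is correct and differs from the paper's. The iteration \eqref{eq:buc_grad_iteration} and the Caccioppoli step only involve $v$ on an enlarged cylinder, so they apply to $u_1-u_0(x)$. This bounds the $\Z$-average at $(t,x)$ by the oscillation of $u_1$ on $(0,t)\times\B(x,C\sqrt t)$, which tends to $0$ uniformly by Proposition~\ref{prop:apriori}\ref{it:buc}. This avoids the paper's perturbative representation $\Ext_{A_0}(u_0)=\Ext_\Id(u_0)+\Sol_{A_0}((A_0-\Id)\nabla\Ext_\Id(u_0))$ and Lemma~\ref{lem:gradient_heat_extension_BUC}. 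The price is reopening the proof of Proposition~\ref{prop:apriori}\ref{it:buc_grad} to extract its local form.
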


	\begin{proof}
		The proposition follows from Proposition~\ref{prop:apriori}\ref{it:uniqueness} for uniqueness, together with Lemmas~\ref{lem:existence_LP},~\ref{lem:X} and~\ref{lem:gradient_free_evolution_BUC} below for existence along with the estimates.
	\end{proof}

	\begin{remark}
		We are not claiming that $u\in \BUC([0,T]\times \R^n)$, nor $\nabla u \in \L^2(0,T; \L^2_\loc(\R^n))$.
		Observe that convergence of $u$ to $u_0$ becomes uniform whenever $\| F \|_{\Z^{\infty,q}_{-\nicefrac{1}{2}}(t)} \to 0$ as $t\to 0$.
	\end{remark}

	\subsection{Inhomogeneous Cauchy problem}
	\label{subsec:auto_inhomogeneous}

	As part of~\eqref{eq:CP}, we consider the \emph{inhomogeneous} Cauchy problem
	\begin{align}
		\label{eq:LP}
		\tag{ICP}
		\partial_t u - \Div(A_0\nabla u) = \Div(F), \quad u(0) = 0.
	\end{align}

	\begin{lemma}[Well-posedness of bounded weak solutions to~\eqref{eq:LP}]
		\label{lem:existence_LP}
		Fix $q \in (n+2, \infty)$ and let $T \in (0,\infty)$.
		Then, for $F \in \Z^{\infty,q}_{-\nicefrac{1}{2}}(T)$ there exists a unique, bounded weak solution $u \colon (0,T) \times \R^n \to \R$ to~\eqref{eq:LP} with $u(t) \to 0$ in $\mathcal{D}'(\R^n)$.
		Moreover, there exists a constant $C(T)$, non-decreasing in $T$ and depending on the modulus of continuity and coefficient bounds of $A_0$ and $q$, such that
		\begin{align}
			\label{eq:existence_LP_bound}
			\| u \|_{\L^\infty(T)} + \| \nabla  u \|_{\Z^{\infty,q}_{-\frac{1}{2}}(T)} \leq C(T) \| F \|_{\Z^{\infty,q}_{-\frac{1}{2}}(T)}.
		\end{align}
		We set $\Sol_{A_0}(F) \coloneqq u$.
	\end{lemma}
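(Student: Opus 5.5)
Uniqueness is immediate. The difference of two bounded weak solutions with vanishing distributional trace solves~\eqref{eq:CP_NAT} with $A = A_0$ and $F = 0$. By Proposition~\ref{prop:apriori}\ref{it:Loo} it has an $\L^\infty$ trace in $\L^2_\loc$, which must then coincide with the distributional limit $0$. Proposition~\ref{prop:apriori}\ref{it:uniqueness} then gives that the difference vanishes. The real work is existence together with~\eqref{eq:existence_LP_bound}. For this I would construct $u$ by Duhamel's formula
\begin{align}
u(t) = \int_0^t \e^{-(t-s)L}\Div F(s) \d s, \qquad L = -\Div(A_0\nabla\cdot),
\end{align}
interpreted through the kernel of $\e^{-\tau L}\Div$, i.e.\ $-\nabla_y$ of the Gaussian heat kernel $\Gamma$ of $A_0$ (Aronson).

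For the $\L^\infty$ bound, the key fact is that uniform continuity of $A_0$ gives the kernel of $\e^{-\tau L}\Div$ Gaussian-type $\L^{q'}$ off-diagonal bounds: its $\L^{q'}(\B(y_0,\sqrt\tau))$ norm in $y$ is $\lesssim \tau^{-1/2-n/(2q)} \e^{-c|x-y_0|^2/\tau}$. These follow from the local $W^{1,q}$ regularity of solutions to the adjoint equation, as in Proposition~\ref{prop:apriori}\ref{it:buc_grad} and \cite{DEK}, with constants depending on $T$ through the modulus of continuity. One then decomposes $(0,t)\times\R^n$ into Whitney-type regions $(2^{-k-1}t,2^{-k}t)\times$ balls of radius $\sqrt{2^{-k}t}$ and applies Hölder on each piece. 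The $\Z$-norm controls $\|s^{1/2}F\|_{\L^q}$ averages there, and summing the Gaussian tails over balls and the geometric series in $k$ yields $\|u\|_{\L^\infty(T)} \lesssim \|F\|_{\Z^{\infty,q}_{-1/2}(T)}$. The condition $q>n+2$ is exactly what makes the time-integral $\int_0^t (t-s)^{-1/2-(n+2)/(2q)}\cdots s^{-1/2}$-type sums converge near $s=t$. To be rigorous, I would first take $F$ truncated and smooth with compact support in $(\delta,T)\times\R^n$, where the classical theory gives a genuine weak solution, and pass to the limit at the end.

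For the gradient bound, fix $(t,x)$ and rescale to the unit configuration as in the proof of Proposition~\ref{prop:apriori}\ref{it:buc_grad}. On $(1/2,1)\times\B(0,1)$, localize with a cutoff $\psi$ supported in $(1/4,1]\times\B(0,2)$. The function $w=u\psi$ solves an equation with right side $f+\Div(\boldsymbol g + \psi F)$, with $f$ and $\boldsymbol g$ built from $u$ and $\nabla u$ as before. The $\L^q$ estimate of \cite[Prop.~4.1]{DEK} then bounds $\|\nabla u\|_{\L^q}$ on the small cylinder by $\|F\|_{\L^q}$ on the larger cylinder plus $\|u\|_\infty$ and lower integrability of $\nabla u$. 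The $F$-term is controlled by the $\Z$-norm after rescaling, since the cylinder is covered by boundedly many Whitney boxes. The lower gradient norms are iterated down to $\L^2$ exactly as in~\eqref{eq:buc_grad_iteration}, and finished by Caccioppoli (with source $F\in\L^2$ away from $s=0$), which returns $\|u\|_\infty+\|F\|_{\Z}$. Combining with the $\L^\infty$ bound gives~\eqref{eq:existence_LP_bound}, with $C(T)$ non-decreasing because rescaling to $T'\le T$ only improves the modulus of continuity.

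Finally, for general $F$ I approximate by $F_k = F\ind_{(1/k,T)}\ind_{\B(0,k)}$, which has uniformly bounded $\Z$-norm. The bounds above give weak-$*$ compactness of $u_k$ in $\L^\infty$ and weak compactness of $\nabla u_k$ in $\L^q_\loc$, so the limit is a bounded weak solution. The distributional trace $u(t)\to0$ follows by testing the equation against $\psi\in\Cont^\infty_\cc(\R^n)$: indeed $\langle u(t),\psi\rangle = -\int_0^t\langle A_0\nabla u+F,\nabla\psi\rangle\d s$, and $\int_0^t\fint_{\B}|s^{1/2}g|\,s^{-1/2}\d s\lesssim t^{1/2}\|g\|_{\Z}$ for $g\in\{\nabla u,F\}$ via the Whitney decomposition. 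I expect the main obstacle to be the $\L^\infty$ bound, specifically establishing the $\L^{q'}$ off-diagonal estimates for $\nabla_y\Gamma$ uniformly up to time $T$. These genuinely require the continuity of $A_0$, and that is where $q>n+2$ enters.
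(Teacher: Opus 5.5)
Your proposal is correct, but it takes a genuinely different route from the paper. Uniqueness is handled as in the paper, via Proposition~\ref{prop:apriori}\ref{it:uniqueness}. For existence and~\eqref{eq:existence_LP_bound}, the paper's proof is a citation. It sets $u \coloneqq \Sol^L_{\nicefrac{1}{2}}(F)$ and $\nabla u = \Sol^L_0(F)$, using the operators of \cite[Sec.~4]{AB}. It then reads off weak solvability, the distributional trace and both bounds from \cite[Props.~4.5, 4.7, 4.3]{AB}. These apply because bounded, uniformly elliptic, uniformly continuous coefficients are regular in the sense of \cite[Def.~2.16]{AB}.

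You instead rebuild the estimate in three stages. First, an $\L^\infty$ bound from the Duhamel formula, kernel bounds and a Whitney decomposition. Second, the $\Z$-bound for $\nabla u$ as a purely local consequence of $u \in \L^\infty$ and $F \in \Z^{\infty,q}_{-\nicefrac{1}{2}}(T)$, via the rescaled DEK/Caccioppoli iteration of Proposition~\ref{prop:apriori}\ref{it:buc_grad}. Third, approximation. The paper's route is short but outsources everything to global operator bounds in \cite{AB}. Yours is essentially self-contained and shows where each hypothesis is used.

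On that point, one claim in your plan is off: the $\L^\infty$ bound does not need continuity of $A_0$. Since $q > n+2$ forces $q' < 2$, the $\L^{q'}$ averages of $\nabla_y\Gamma(\tau,x,\cdot)$ on balls of radius $\sqrt{\tau}$ are dominated by $\L^2$ averages. These obey $\tau^{-(n+1)/2}\e^{-c|x-y_0|^2/\tau}$ for every bounded measurable real elliptic $A_0$. To see this, write $\Gamma(\tau,x,\cdot) = \e^{-\tau L^*/2}\Gamma(\tau/2,x,\cdot)$ and combine Aronson's bounds with Gaffney estimates for $\sqrt{\tau}\nabla\e^{-\tau L^*/2}$. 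Space--time H\"older on $(t/2,t)$ then needs $\tau^{n/2-q'(n+1)/2}$ to be integrable at $\tau=0$, which is exactly $q>n+2$. Your displayed exponent is garbled, but the conclusion is right. Continuity enters only in the gradient step, through the $\L^r$ form of the estimate in \cite{DEK}, which you need for the divergence term $\psi F$. In that step, $f$ should also contain $-F\cdot\nabla\psi$.

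Finally, when passing the trace identity to the limit, weak-$*$ convergence gives no information at a fixed $t$. Integrate against a time test function instead, and use your uniform $t^{1/2}$ bound near $s=0$.
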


	\begin{proof}
		Uniqueness is granted by Proposition~\ref{prop:apriori}\ref{it:uniqueness}.
		To obtain existence, we apply the results of~\cite{AB}.
		More precisely, we set $L = -\Div(A_0\nabla)$ and use the operators $\Sol^L_{\nicefrac{1}{2}}$ and $\Sol^L_0$ defined in~\cite[Sec.~4]{AB}. Then, $u \coloneqq \Sol^L_{\nicefrac{1}{2}}(F)$ solves~\eqref{eq:LP} in the weak sense by~\cite[Prop.~4.5]{AB} and satisfies $u(t) \to 0$ in $\mathcal{D}'(\R^n)$ by~\cite[Prop.~4.7]{AB}. Moreover, since we have $\nabla u = \Sol^L_0(F)$, the bound~\eqref{eq:existence_LP_bound} is a consequence of boundedness of the operators $\Sol^L_{\nicefrac{1}{2}}$ and $\Sol^L_0$ established in~\cite[Prop.~4.3]{AB}.
		In particular, this shows that $u$ is a bounded weak solution to~\eqref{eq:LP}. To apply the above results, we need that $A_0$ is a \emph{regular} coefficient function in the sense of~\cite[Def.~2.16]{AB}, but this is the case for all bounded, uniformly elliptic and uniformly continuous coefficient function as was discussed in~\cite[Ex.~2.18]{AB}.
	\end{proof}

	\subsection{Homogeneous Cauchy problem}
	\label{subsec:auto_homogeneous}

	As part of~\eqref{eq:CP}, we consider for $u_0 \in \BUC(\R^n)$ the \emph{homogeneous} Cauchy problem
	\begin{align}
		\label{eq:caloric}
		\tag{HCP}
		\partial_t u -\Div(A_0\nabla u) = 0, \quad u(0) = u_0.
	\end{align}
	The main result of this subsection is Lemma~\ref{lem:gradient_free_evolution_BUC}, which complements Proposition~\ref{prop:apriori}\ref{it:buc_grad} by a smallness condition for $\nabla u$ on small timescales.

	\subsubsection*{Heat equation}

	We start out with preliminary results for the case $A_0 = \Id$, in which~\eqref{eq:caloric} reduces to the homogeneous heat equation
	\begin{align}
		\label{eq:heat}
		\tag{HE}
		\partial_t u -\Delta u = 0, \quad u(0) = u_0.
	\end{align}
	For any $u_0 \in \BUC(\R^n)$, the heat extension $u(t,x) = \e^{t\Delta} u_0(x)$ provides us with a bounded weak solution to~\eqref{eq:heat} on $(0,\infty) \times \R^n$. Indeed, the heat semigroup is strongly continuous on $\BUC(\R^n)$, see for instance~\cite[Thm.~5.1.2~(i)]{Lunardi_Semigroup}, and $u$ satisfies~\eqref{eq:heat} weakly by virtue of an integration by parts, as it is smooth in $(0,\infty) \times \R^n$.
	Put $\Ext_\Id(u_0)(t) \coloneqq \e^{t\Delta} u_0$.

	We recall standard gradient estimates for the heat extension in terms of weighted $\L^\infty$-spaces.

	\begin{lemma}[Gradient of the heat extension]
		\label{lem:gradient_heat_extension_BUC}
		For $u_0 \in \BUC(\R^n)$, we have the estimate
		\begin{align}
			\label{eq:he_grad_bound}
			\| \nabla \Ext_\Id(u_0) \|_{\L^\infty_{-\frac{1}{2}}} \leq \frac{1}{\sqrt{2}} \| u_0 \|_\infty,
		\end{align}
		and there holds
		\begin{align}
			\label{eq:he_grad_limit}
			\lim_{t\to 0} \| \nabla \Ext_\Id(u_0) \|_{\L^\infty_{-\frac{1}{2}}(t)} = 0.
		\end{align}
	\end{lemma}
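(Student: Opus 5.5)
The first bound~\eqref{eq:he_grad_bound} comes from the explicit heat kernel. Write $\e^{t\Delta}u_0 = G_t * u_0$ with $G_t(x) = (4\pi t)^{-n/2}\e^{-|x|^2/(4t)}$. Differentiation under the integral is justified for $t>0$ because $u_0$ is bounded and $\nabla G_t$ is integrable. This gives $\nabla \e^{t\Delta}u_0 = (\nabla G_t) * u_0$, and therefore
\begin{align}
	|\nabla \e^{t\Delta}u_0(x)| \leq \|u_0\|_\infty \sup_{|\omega|=1} \int_{\R^n} |\omega\cdot\nabla G_t(y)| \d y .
\end{align}
By rotation invariance the supremum is attained in any direction, say $\omega = e_1$. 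Since $\partial_1 G_t(y) = -\tfrac{y_1}{2t} G_t(y)$ and $G_t$ factorizes into one-dimensional Gaussians, the integral reduces to
\begin{align}
	\frac{1}{2t}\int_\R |y_1| (4\pi t)^{-1/2}\e^{-y_1^2/(4t)}\d y_1 = \frac{1}{2t}\cdot\frac{2\sqrt{t}}{\sqrt{\pi}} = \frac{1}{\sqrt{\pi t}} .
\end{align}
Hence $t^{1/2}|\nabla\e^{t\Delta}u_0| \leq \pi^{-1/2}\|u_0\|_\infty \leq 2^{-1/2}\|u_0\|_\infty$, which is the stated constant.

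For the limit~\eqref{eq:he_grad_limit}, I would use uniform continuity through a density argument. Fix $\eps>0$ and choose $v_0 \in \BUC(\R^n)$ with bounded gradient and $\|u_0 - v_0\|_\infty \leq \eps$. Mollification works here: $v_0 = u_0 * \rho_\delta$ converges uniformly to $u_0$ precisely because $u_0$ is uniformly continuous, and $\|\nabla v_0\|_\infty \leq \|u_0\|_\infty \|\nabla\rho_\delta\|_1 < \infty$. By linearity and~\eqref{eq:he_grad_bound},
\begin{align}
	t^{1/2}|\nabla \e^{t\Delta}u_0| \leq 2^{-1/2}\eps + t^{1/2}|\nabla\e^{t\Delta}v_0| .
\end{align}
The heat semigroup commutes with derivatives and is contractive on $\L^\infty$, so $|\nabla\e^{t\Delta}v_0| = |\e^{t\Delta}\nabla v_0| \leq \|\nabla v_0\|_\infty$. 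Therefore
\begin{align}
	\|\nabla\Ext_\Id(u_0)\|_{\L^\infty_{-1/2}(t)} \leq 2^{-1/2}\eps + t^{1/2}\|\nabla v_0\|_\infty ,
\end{align}
and letting $t\to 0$ and then $\eps \to 0$ gives the limit.

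There is no real obstacle. The only points needing care are computing the $\L^1$ norm of the directional derivative of the kernel to get the constant, and noting that the approximation of $u_0$ in the density step relies on uniform continuity of $u_0$, not mere boundedness.
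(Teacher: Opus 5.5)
Your proof is correct and follows the route the paper only sketches. For the bound, the paper just says to use an explicit heat-kernel computation with the Euclidean norm on $\R^n$; your directional $\L^1$ computation fills this in and in fact yields the sharper constant $\pi^{-1/2} \leq 2^{-1/2}$. For the limit as $t \to 0$, your mollification argument, which uses the uniform continuity of $u_0$, is one of the two options the paper mentions (the other being the mean-zero property of the kernel of $\nabla \Ext_\Id$).
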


	\begin{proof}
		In order to obtain the constant $\nicefrac{1}{\sqrt{2}}$ in~\eqref{eq:he_grad_bound}, we equip $\R^n$ with the Euclidean norm. To obtain~\eqref{eq:he_grad_limit}, one can use that the convolution kernel of $\nabla \Ext_\Id$ has mean-value zero or employ an approximation argument.
	\end{proof}

	\subsubsection*{General initial value problems}

	We return to the homogeneous Cauchy problem~\eqref{eq:caloric}. Bounded weak solutions can be constructed using the heat semigroup generated by $L \coloneqq -\Div(A_0\nabla)$, and they possess the regularity established in Proposition~\ref{prop:apriori}.

	\begin{lemma}[Free evolution]
		\label{lem:X}
		For all $u_0 \in \BUC(\R^n)$ there exists a unique, bounded weak solution to~\eqref{eq:caloric}, denoted by $\Ext_{A_0}(u_0)$, which is uniformly continuous on $[0,\infty) \times \R^n$ and satisfies the estimate
		\begin{align}
			\| \Ext_{A_0}(u_0) \|_{\L^\infty} \leq \| u_0 \|_\infty.
		\end{align}
	\end{lemma}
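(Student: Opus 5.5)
Uniqueness is the quickest part. If $u$ and $\tilde u$ are bounded weak solutions to~\eqref{eq:caloric} with the same datum $u_0$, then $u-\tilde u$ is a bounded weak solution to~\eqref{eq:CP_NAT} with $A=A_0$. By Proposition~\ref{prop:apriori}\ref{it:Loo} its trace at $t=0$ exists and equals $0$, and Proposition~\ref{prop:apriori}\ref{it:uniqueness} then gives $u=\tilde u$.

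For existence I will use the semigroup generated by $L = -\Div(A_0\nabla)$, or equivalently Aronson's fundamental solution. I set
\begin{align}
	\Ext_{A_0}(u_0)(t,x) \coloneqq \int_{\R^n} \Gamma(t,x,0,y)\, u_0(y) \d y,
\end{align}
where $\Gamma$ is the kernel of $\e^{-tL}$ in this autonomous case. Since $\Gamma \geq 0$ and $\int_{\R^n} \Gamma(t,x,0,y)\d y = 1$, the function $u \coloneqq \Ext_{A_0}(u_0)$ is bounded with $|u(t,x)| \leq \|u_0\|_\infty$. This gives the stated $\L^\infty$ bound directly.

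Next I verify that $u$ is a weak solution. I first approximate $u_0$ by functions $u_{0,k} \in \L^2(\R^n) \cap \L^\infty(\R^n)$, for instance $u_0\ind_{\B(0,k)}$. For these data, $\e^{-tL}u_{0,k}$ is an energy solution: it lies in $\L^2(0,T;\H^1)$, satisfies the weak formulation, and attains its initial value $u_{0,k}$ in $\L^2$. By the Gaussian bounds~\eqref{eq:Gaussian}, $\e^{-tL}u_{0,k} \to u$ locally uniformly on $(0,\infty)\times\R^n$, with uniform $\L^\infty$ bounds. The Carleson estimate of Proposition~\ref{prop:apriori}\ref{it:Carleson}, applied to the differences $\e^{-tL}(u_{0,k}-u_{0,m})$, together with Caccioppoli's inequality on compact subsets of $(0,\infty)\times\R^n$, gives convergence of the gradients in $\L^2_\loc$. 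Hence the limit $u$ satisfies the weak formulation and has $\nabla u \in \L^2_\loc$.

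The initial condition is the main obstacle. To get $u(t) \to u_0$, I will use the Gaussian upper bound together with the uniform continuity of $u_0$. Writing
\begin{align}
	u(t,x) - u_0(x) = \int_{\R^n} \Gamma(t,x,0,y)\,\bigl(u_0(y)-u_0(x)\bigr)\d y,
\end{align}
which uses the conservation property, I split the integral into the regions $|x-y|<\delta$ and $|x-y|\geq\delta$. The first region contributes at most $\omega(\delta)$, where $\omega$ is the modulus of continuity of $u_0$. The second contributes at most $2\|u_0\|_\infty \int_{|z|\geq \delta/\sqrt t} C_1 \e^{-c_1|z|^2}\d z$. Together this gives $u(t)\to u_0$ in $\L^\infty(\R^n)$, and in particular in $\L^2_\loc$, so the trace is $u_0$. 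The one point needing care is conservation of mass on $\R^n$, $\e^{-tL}1=1$. I justify it from the divergence form of $L$, for example via $\L^\infty$-extrapolation of the semigroup and $L1 = 0$, or cite Aronson.

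Finally, uniform continuity on $[0,\infty)\times\R^n$ follows from Proposition~\ref{prop:apriori}\ref{it:buc}, since $u(0)=u_0 \in \BUC(\R^n)$.
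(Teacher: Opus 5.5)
Your proposal is correct and follows the paper's route. The paper only indicates that $\Ext_{A_0}(u_0)$ is built from the semigroup generated by $L = -\Div(A_0\nabla)$ and then inherits uniqueness, the $\L^\infty$ bound and uniform continuity from Proposition~\ref{prop:apriori}; you flesh this out, with the Gaussian-kernel splitting supplying the initial trace, and the one small imprecision is that the Carleson estimate only bounds the differences $\e^{-tL}(u_{0,k}-u_{0,m})$ without making them small, so the $\L^2_\loc$ convergence of the gradients actually comes from Caccioppoli's inequality combined with the local uniform convergence.
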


	We derive a representation formula for $\Ext_{A_0}(u_0)$ in terms of $\Ext_\Id(u_0)$, showing the generic initial value problem~\eqref{eq:caloric} as a \enquote{perturbation} of the heat equation.

	\begin{lemma}[Representation]
		\label{lem:caloric_repr}
		For $u_0 \in \BUC(\R^n)$, there holds the representation formula
		\begin{align}
			\label{eq:caloric_extension_repr}
			\Ext_{A_0}(u_0) = \Ext_\Id(u_0) + \Sol_{A_0}((A_0-\Id)\nabla \Ext_\Id(u_0)),
		\end{align}
		where $\Sol_{A_0}((A_0-\Id)\nabla \Ext_\Id(u_0))$ is defined in Lemma~\ref{lem:existence_LP}.
	\end{lemma}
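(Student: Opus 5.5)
The plan is to show that the right-hand side of \eqref{eq:caloric_extension_repr}, which we call $w \coloneqq \Ext_\Id(u_0) + v$ with $v \coloneqq \Sol_{A_0}(F)$ and $F \coloneqq (A_0-\Id)\nabla \Ext_\Id(u_0)$, is a bounded weak solution to~\eqref{eq:caloric}. Uniqueness from Proposition~\ref{prop:apriori}\ref{it:uniqueness} then identifies $w$ with $\Ext_{A_0}(u_0)$.

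First I check that $v$ is well defined. Since $A_0$ is bounded, Lemma~\ref{lem:gradient_heat_extension_BUC} gives $F \in \L^\infty_{-\nicefrac{1}{2}}$. By the nesting property, $F \in \Z^{\infty,q}_{-\nicefrac{1}{2}}(T)$ for every $q \in (n+2,\infty)$ and every $T \in (0,\infty)$. Lemma~\ref{lem:existence_LP} therefore produces, on each $(0,T)$, a unique bounded weak solution $v$ to~\eqref{eq:LP} with $v(t) \to 0$ in $\mathcal{D}'(\R^n)$. By uniqueness, the solutions on different time intervals are compatible, so they glue to a single function on $(0,\infty)\times\R^n$.

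Next I verify the equation. Using $\Delta = \Div(\Id\nabla)$, the heat extension $h \coloneqq \Ext_\Id(u_0)$ satisfies $\partial_t h - \Div(A_0 \nabla h) = -\Div((A_0-\Id)\nabla h) = -\Div(F)$ weakly, since $h$ is smooth on $(0,\infty)\times\R^n$. Adding the equation $\partial_t v - \Div(A_0\nabla v) = \Div(F)$ shows that $w$ solves $\partial_t w - \Div(A_0\nabla w) = 0$ weakly.

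It remains to check that $w$ qualifies for the uniqueness statement. Boundedness is clear, because $h$ and $v$ are bounded on each $(0,T)$. Local square integrability of $\nabla w$ on $(0,T)\times\R^n$ holds because $\nabla h$ and $\nabla v$ lie in $\Z^{\infty,q}_{-\nicefrac{1}{2}}$, which controls $\L^q_\loc$ away from $t=0$. For the initial trace, $h(t) \to u_0$ uniformly and $v(t) \to 0$ in $\mathcal{D}'$, so $w(t) \to u_0$ in $\mathcal{D}'(\R^n)$.

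The main obstacle is the initial condition. Proposition~\ref{prop:apriori}\ref{it:Loo} gives a trace $w(0) \in \L^\infty$ with $w(t)\to w(0)$ in $\L^2_\loc$, and distributional limits are unique, so $w(0) = u_0$. Hence $w - \Ext_{A_0}(u_0)$ is a bounded weak solution of~\eqref{eq:CP_NAT} with zero initial trace, and it vanishes by Proposition~\ref{prop:apriori}\ref{it:uniqueness}, first on each $(0,T)$ and then globally. Note that Proposition~\ref{prop:apriori} applies to $w$ directly, since $w$ is a bounded weak solution of the homogeneous equation; this circumvents the failure of square integrability of $F$ at $t=0$ described in Remark~\ref{rem:Z_not_L2}.
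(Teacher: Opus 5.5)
Your proposal is correct and takes the same approach as the paper: show that the right-hand side is a bounded weak solution to~\eqref{eq:caloric}, then conclude by the uniqueness in Proposition~\ref{prop:apriori}\ref{it:uniqueness}. You also glue the solutions of Lemma~\ref{lem:existence_LP} across finite time intervals and identify the initial trace through Proposition~\ref{prop:apriori}\ref{it:Loo}; these are details the paper leaves implicit.
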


	\begin{proof}
		Lemma~\ref{lem:existence_LP} applies with $F \coloneqq (A_0-\Id)\nabla \Ext_\Id(u_0)$ thanks to Lemma~\ref{lem:gradient_heat_extension_BUC} and the nesting property of weighted $\Z$-spaces. Hence, the right-hand side of~\eqref{eq:caloric_extension_repr} is a bounded weak solution to~\eqref{eq:caloric}.
		So is the left-hand side of~\eqref{eq:caloric_extension_repr}, therefore the claim follows by the uniqueness result in Proposition~\ref{prop:apriori}\ref{it:uniqueness}.
	\end{proof}

	\begin{lemma}[Gradient of the free evolution]
		\label{lem:gradient_free_evolution_BUC}
		Fix $q\in (1, \infty)$ and let $T \in (0,\infty)$.
		Then, there exists a finite constant $C(T)$, non-decreasing in $T$ and depending on the modulus of continuity and coefficient bounds of $A_0$ and $q$, such that for all $u_0 \in \BUC(\R^n)$, we have
		\begin{align}
			\label{eq:ivp_grad_bound}
			\| \nabla \Ext_{A_0}(u_0) \|_{\Z^{\infty,q}_{-\frac{1}{2}}(T)} \leq C(T) \| u_0 \|_\infty.
		\end{align}
		Moreover, there holds
		\begin{align}
			\label{eq:ivp_grad_limit}
			\lim_{t\to 0} \| \nabla \Ext_{A_0}(u_0) \|_{\Z^{\infty,q}_{-\frac{1}{2}}(t)} = 0.
		\end{align}
	\end{lemma}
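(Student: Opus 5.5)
The bound~\eqref{eq:ivp_grad_bound} is immediate from Proposition~\ref{prop:apriori}\ref{it:buc_grad}. Indeed, $\Ext_{A_0}(u_0)$ is a bounded weak solution to~\eqref{eq:CP_NAT} with the time-independent, uniformly continuous coefficient $A_0$, and its trace is $u_0$ by Lemma~\ref{lem:X}. So the real content is the limit~\eqref{eq:ivp_grad_limit}, and for it I would use the representation formula of Lemma~\ref{lem:caloric_repr}:
\begin{align}
	\nabla \Ext_{A_0}(u_0) = \nabla \Ext_\Id(u_0) + \nabla \Sol_{A_0}\bigl((A_0-\Id)\nabla \Ext_\Id(u_0)\bigr).
\end{align}

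By the nesting property of weighted $\Z$-spaces, it suffices to treat large $q$, say $q \in (n+2,\infty)$. The first term is controlled by Lemma~\ref{lem:gradient_heat_extension_BUC}, since $\|\cdot\|_{\Z^{\infty,q}_{-\nicefrac{1}{2}}(t)} \le \|\cdot\|_{\L^\infty_{-\nicefrac{1}{2}}(t)}$ and the latter tends to $0$ by~\eqref{eq:he_grad_limit}.

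For the second term, set $F \coloneqq (A_0-\Id)\nabla \Ext_\Id(u_0)$. Then
\begin{align}
	\|F\|_{\Z^{\infty,q}_{-\frac12}(t)} \le (\|A_0\|_\infty + 1)\,\|\nabla \Ext_\Id(u_0)\|_{\L^\infty_{-\frac12}(t)} \to 0.
\end{align}
I would then exploit causality: the restriction of $\Sol_{A_0}(F)$ to $(0,t)\times\R^n$ is a bounded weak solution to~\eqref{eq:LP} on $(0,t)$ with data $F|_{(0,t)}$ and vanishing distributional initial limit. By the uniqueness part of Lemma~\ref{lem:existence_LP}, it therefore coincides with $\Sol_{A_0}(F|_{(0,t)})$. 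Applying~\eqref{eq:existence_LP_bound} on $(0,t)$, with $C(t)\le C(T)$ by monotonicity, gives
\begin{align}
	\|\nabla \Sol_{A_0}(F)\|_{\Z^{\infty,q}_{-\frac12}(t)} \le C(T)\,\|F\|_{\Z^{\infty,q}_{-\frac12}(t)} \to 0 \quad \text{as } t\to 0.
\end{align}

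The main point requiring care is this restriction/causality step. One must check that the $\Z$-norm on $(0,t)$ only sees values on $(0,t)$, which is true by definition since the averages run over $s\in(t'/2,t')$ with $t'<t$. One must also check that uniqueness in Lemma~\ref{lem:existence_LP} (via Proposition~\ref{prop:apriori}\ref{it:uniqueness}, which uses Aronson's uniqueness class) applies to the restricted function. This amounts to verifying that the restriction is still bounded, solves the equation weakly on $(0,t)$, and tends to $0$ in $\mathcal{D}'$, all of which are inherited directly. Combining the two terms yields~\eqref{eq:ivp_grad_limit}.
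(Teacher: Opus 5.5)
Your proof is correct, and for the limit~\eqref{eq:ivp_grad_limit} it is the paper's argument. You reduce to $q>n+2$, split via Lemma~\ref{lem:caloric_repr}, treat $\nabla\Ext_\Id(u_0)$ with Lemma~\ref{lem:gradient_heat_extension_BUC}, and bound the $\Sol_{A_0}$ term with Lemma~\ref{lem:existence_LP} on short intervals (the paper uses the constant $C(1)$, you use $C(T)$). Your causality check, that the restriction of $\Sol_{A_0}(F)$ to $(0,t)$ equals $\Sol_{A_0}(F|_{(0,t)})$ by uniqueness, is exactly what licenses applying Lemma~\ref{lem:existence_LP} on $(0,t)$; the paper leaves this step implicit.

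The difference is in the bound~\eqref{eq:ivp_grad_bound}. You read it off Proposition~\ref{prop:apriori}\ref{it:buc_grad} applied to the time-independent coefficient $A_0$. The paper instead obtains it \enquote{by a similar reasoning} from the representation formula, i.e.\ from Lemma~\ref{lem:gradient_heat_extension_BUC} together with~\eqref{eq:existence_LP_bound}. This gives a constant of the form $\tfrac{1}{\sqrt{2}}\bigl(1 + C(T)(\|A_0\|_\infty + 1)\bigr)$, with $C(T)$ taken from Lemma~\ref{lem:existence_LP}.

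Your shortcut is legitimate. Proposition~\ref{prop:apriori}\ref{it:buc_grad} is proved independently, via the Dong--Escauriaza--Kim estimates for uniformly continuous coefficients plus Caccioppoli's inequality. The paper itself presents this lemma as complementing that proposition by the smallness statement. The paper's route buys something different: the whole lemma stays inside the autonomous theory behind Lemma~\ref{lem:existence_LP}, so bound and limit come from a single decomposition and a single constant.
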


	\begin{proof}
		By the nesting property of weighted $\Z$-spaces, it suffices to consider $q\in (n+2,\infty)$. By Lemma~\ref{lem:caloric_repr}, we have the representation
		\begin{align}
			\nabla \Ext_{A_0}(u_0) = \nabla \Ext_\Id(u_0) + \nabla \Sol_{A_0}((A_0-\Id)\nabla \Ext_\Id(u_0)).
		\end{align}
		We start with~\eqref{eq:ivp_grad_limit}.
		For $\nabla \Ext_\Id(u_0)$, the claim follows from Lemma~\ref{lem:gradient_heat_extension_BUC} and the nesting property of weighted $\Z$-spaces. As for $\nabla \Sol_{A_0}((A_0-\Id)\nabla \Ext_\Id(u_0))$, we use Lemma~\ref{lem:existence_LP} with a constant $C = C(1)$, boundedness of $A_0$ and Lemma~\ref{lem:gradient_heat_extension_BUC}, to find
		\begin{align}
			\limsup_{t\to 0} \| \nabla \Sol_{A_0}((A_0-\Id)\nabla \Ext_\Id(u_0)) \|_{\Z^{\infty,q}_{-\frac{1}{2}}(t)} \lesssim \limsup_{t\to 0} \| \nabla \Ext_\Id(u_0) \|_{\Z^{\infty,q}_{-\frac{1}{2}}(t)} = 0.
		\end{align}
		Combining both facts gives~\eqref{eq:ivp_grad_limit}. By a similar reasoning, we deduce~\eqref{eq:ivp_grad_bound}. The dependence on $T$ of the constant $C(T)$ stems from Lemma~\ref{lem:existence_LP}.
	\end{proof}

	\section{The quasilinear problem}
	\label{sec:ql}

	Throughout this section, let $a \colon [0,\infty) \times \R^n \times O \to \R^{n \times n}$ with $O \subseteq \R$ open satisfy Assumption~\ref{ass:a}.
	We are going to prove Theorem~\ref{thm:main} and Corollary~\ref{cor:Loo} in this section.

	\subsection{A priori properties}
	\label{subsec:ql_apriori}

	We show a priori properties of bounded weak solutions to~\eqref{eq:ql}.

	\begin{proposition}[A priori regularity of bounded weak solutions to~\eqref{eq:ql}]
		\label{prop:ql_apriori}
		Let $T \in (0,\infty]$ and $u_0 \in \BUC(\R^n)$ with $\overline{\Ran(u_0)} \subseteq O$. For a bounded weak solution $u \colon (0,T) \times \R^n \to O$  to~\eqref{eq:ql}, the following hold:
		\begin{enumerate}[(i)]
			\item\label{it:ql1} $u$ is uniformly continuous on $[0,T) \times \R^n$. In particular, if $T$ is finite, then $u(T)$ exists and belongs to $\BUC(\R^n)$.
			\item\label{it:ql2} $\overline{\Ran(u)} = \overline{\Ran(u_0)}$, and hence $\| u \|_{\L^\infty(T)} = \| u_0 \|_\infty$.
			\item\label{it:ql2.5} There exists a finite constant $C$ depending on $\overline{\Ran(u_0)}$, such that we have the Carleson measure estimate
				\begin{align}
					\sup_{x\in \R^n} \sup_{t \in (0,T]} \bigg( \int_0^t \fint_{\B(x,\sqrt{t})} |\nabla u(s,y)|^2 \d y \d s \bigg)^\frac{1}{2} \leq C \| u_0 \|_\infty.
			\end{align}
			\item\label{it:ql3} For all $T' \in (0,T] \cap (0,\infty)$ and $q \in (1,\infty)$, we have $\nabla u \in \Z^{\infty,q}_{-\nicefrac{1}{2}}(T')$ with
			\begin{align}
				\lim_{t\to 0} \| \nabla u \|_{\Z^{\infty,q}_{-\frac{1}{2}}(t)} = 0.
			\end{align}
		\end{enumerate}
	\end{proposition}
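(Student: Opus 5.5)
The plan is to freeze the nonlinearity and reduce everything to the linear non-autonomous theory of Proposition~\ref{prop:apriori}. Since $u$ is a bounded weak solution, the essential range of $u$ is compact: $K \coloneqq \overline{\vphantom{t}\Ran}(u) \subseteq O$. Set $A(t,x) \coloneqq a(t,x,u(t,x))$.

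\emph{Step 1: $A$ is admissible.} On each $(0,T')$ with $T'$ finite, $A$ is measurable. It is uniformly elliptic with constant $\lambda(K)$ by Assumption~\ref{ass:a}\ref{it:a1}. Boundedness needs a small argument, because $0$ need not lie in $K$. Since $a(t,x,\cdot)$ is uniformly continuous on $[0,T']\times\R^n\times K$ by \ref{it:a4}, it is bounded there: cover $K$ by finitely many small intervals and use uniform continuity together with boundedness at one point in each piece. Alternatively, use \ref{it:a2} on a compact interval $K'\supseteq K\cup\{0\}$ inside $O$ if available, together with \ref{it:a3}. The uniform continuity argument is the safer route.

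Thus $u$ is a bounded weak solution of \eqref{eq:CP_NAT} with coefficients $A$ on $(0,T')$, and $A$ has coefficient bounds depending only on $K$ and $T'$. For the Carleson estimate on $(0,T)$ with $T=\infty$ one needs bounds uniform in time. I would first state \ref{it:ql2.5} on finite windows and then note that the constant of Proposition~\ref{prop:apriori}\ref{it:Carleson} depends only on the bounds. I expect ellipticity and the $K$-dependence to suffice, though this is a point to double-check.

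\emph{Step 2: trace and range.} By Proposition~\ref{prop:apriori}\ref{it:Loo} there is a trace $u(0)$ in $\L^2_\loc$. The distributional limit in Definition~\ref{def:weak_ql} identifies it with $u_0$. Since $u_0\in\BUC$, part~\ref{it:buc} gives uniform continuity of $u$ on $[0,T')\times\R^n$. The modulus depends only on $u_0$ and on the bounds of $A$, and these are uniform in $T'$ when the bounds are, which gives \ref{it:ql1}. Remark~\ref{rem:conv_hull} then gives $\overline{\Ran(u)}=\overline{\Ran(u_0)}$, which is \ref{it:ql2}. Part~\ref{it:Carleson} gives \ref{it:ql2.5}, with $K=\overline{\Ran(u_0)}$ now known, so the constant depends on $\overline{\Ran(u_0)}$.

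\emph{Step 3: the $\Z$-estimate, the main obstacle.} Membership $\nabla u\in\Z^{\infty,q}_{-1/2}(T')$ follows from part~\ref{it:buc_grad}. This requires $A$ to be uniformly continuous, which holds because $u$ is uniformly continuous (Step 2) and $a$ is uniformly continuous on $[0,T']\times\R^n\times K$.

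For the limit as $t\to0$ I would use a perturbation argument rather than the bound of Proposition~\ref{prop:apriori}\ref{it:buc_grad}, since that bound does not decay. Set $A_0(x)\coloneqq a(0,x,u_0(x))$. This $A_0$ is bounded, elliptic and uniformly continuous. Write $u=\Ext_{A_0}(u_0)+w$, where $w$ solves \eqref{eq:LP} with $F=(A-A_0)\nabla u$. The uniform continuity of $u$ and of $a$ gives $\|A-A_0\|_{\L^\infty(t)}=:\eta(t)\to0$. Lemma~\ref{lem:existence_LP} with $q>n+2$ then yields
\[
\|\nabla w\|_{\Z^{\infty,q}_{-\frac12}(t)}\le C(1)\,\eta(t)\,\|\nabla u\|_{\Z^{\infty,q}_{-\frac12}(t)}.
\]
Combining this with Lemma~\ref{lem:gradient_free_evolution_BUC} gives
\[
\|\nabla u\|_{\Z(t)}\le o(1)+C\eta(t)\|\nabla u\|_{\Z(t)},
\]
and absorbing the last term for small $t$ gives the limit. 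Smaller $q$ then follow by nesting.

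The delicate point is justifying $u-\Ext_{A_0}(u_0)=\Sol_{A_0}(F)$. This needs the uniqueness result of Proposition~\ref{prop:apriori}\ref{it:uniqueness} applied to the difference, which is a bounded weak solution with zero trace. To use it, one must check that both sides satisfy the weak formulation with the source $F$ and share the zero trace. This holds since $\nabla u\in\L^2(0,t;\L^2_\loc)$ by \ref{it:ql2.5}, and both sides are bounded.
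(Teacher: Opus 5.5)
Your proposal is essentially the paper's proof. You freeze $A(t,x)=a(t,x,u(t,x))$, read off \ref{it:ql1}--\ref{it:ql2.5} and the membership in \ref{it:ql3} from Proposition~\ref{prop:apriori}, and get the limit by treating $u$ as the solution of the autonomous problem with $A_0(x)=a(0,x,u_0(x))$ and source $F=(A-A_0)\nabla u$. Your splitting $u=\Ext_{A_0}(u_0)+\Sol_{A_0}(F)$ is just Proposition~\ref{prop:cp_wp}, estimate~\eqref{eq:wp_fp_est2}, unpacked. No absorption is needed, since $\|F\|_{\Z^{\infty,q}_{-\frac{1}{2}}(t)}\le\|A-A_0\|_{\L^\infty(t)}\|\nabla u\|_{\Z^{\infty,q}_{-\frac{1}{2}}(T')}\to 0$ directly.

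One correction to Step~1: uniform continuity of $a$ on $[0,T']\times\R^n\times K$ does not give boundedness. The variable $x$ ranges over the unbounded set $\R^n$, and nothing in Assumption~\ref{ass:a} bounds $a(\cdot,\cdot,y_i)$ at your points $y_i$ unless you tie them to $y=0$ via \ref{it:a3}. So your ``safer route'' fails, and boundedness of $A$ must be anchored by \ref{it:a3}, as in Lemma~\ref{lem:fp_uniformly_elliptic}\ref{it:fp_uniformly_elliptic_2}. The paper simply cites that lemma, also for $T=\infty$. Note also that the constants in Proposition~\ref{prop:apriori}\ref{it:Carleson} and~\ref{it:buc} depend on this upper bound of $A$, not only on ellipticity.
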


	\begin{remark}
		\label{rem:ql_apriori}
		Proposition~\ref{prop:ql_apriori}\ref{it:ql2} remains true for $u_0 \in \L^\infty(\R^n)$ with $\overline{\vphantom{t}\Ran}(u_0) \subseteq O$ up to replacing the closure of the range by the convex hull of the essential range. In particular, $u$ and $u_0$ have the same essential infimum and essential supremum. The Carleson measure estimate in~\ref{it:ql2.5} also remains true for $u_0 \in \L^\infty(\R^n)$.
	\end{remark}

	As a preparation for its proof, we investigate coefficient functions related to~\eqref{eq:ql}.

	\begin{lemma}
		\label{lem:fp_uniformly_elliptic}
		The following hold:
		\begin{enumerate}[(i)]
			\item\label{it:fp_uniformly_elliptic_1} For all $v_0 \in \BUC(\R^n)$ with $\overline{\Ran(v_0)} \subseteq K$ for some compact $K \subseteq O$, the coefficient function $A_0(x) \coloneqq a(0,x,v_0(x))$ is bounded, uniformly continuous and uniformly elliptic on $\R^n$.
			The modulus of continuity and coefficient bounds of $A_0$ depend on $K$.
			\item\label{it:fp_uniformly_elliptic_2} For all $T \in (0,\infty]$ and $v \colon (0,T) \times \R^n \to O$ measurable, bounded with $\overline{\vphantom{t}\Ran}(v) \subseteq K$ for some compact $K \subseteq O$, $A(t,x) \coloneqq a(t,x,v(t,x))$ is measurable, bounded and uniformly elliptic on $(0,T) \times \R^n$.
			The coefficient bounds of $A$ depend on $K$.
			\item\label{it:fp_uniformly_elliptic_3} For all $T \in (0,\infty)$ and $v \colon (0,T) \times \R^n \to O$ bounded, uniformly continuous with $\overline{\Ran(v)} \subseteq K$ for some compact $K \subseteq O$, $A(t,x) \coloneqq a(t,x,v(t,x))$ is uniformly continuous on $(0,T) \times \R^n$.
			The modulus of continuity of $A$ depends on $K$.
		\end{enumerate}
	\end{lemma}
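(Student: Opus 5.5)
The three parts are elementary consequences of Assumption~\ref{ass:a}, and I will prove them in the order (ii), (i), (iii), since boundedness and ellipticity are the same computation throughout.

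\emph{Part (ii).} Let $K \subseteq O$ be compact and contain the essential range of $v$ (resp.\ the range of $v_0$). Modifying $v$ on a null set, we may assume $v(t,x) \in K$ everywhere. Ellipticity of $A(t,x) = a(t,x,v(t,x))$ with constant $\lambda(K)$ is then immediate from Assumption~\ref{ass:a}\ref{it:a1}.

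For boundedness, I will use that $K$ is compact, so $K \subseteq [-R,R]$ for some $R$. The difficulty is that \eqref{eq:loc_lip} only compares points of a common compact subset of $O$, and $0$ need not lie in $O$. I therefore avoid comparing with $0$ unless $0 \in O$, and instead use local uniform continuity. Fix $y_* \in K$. By Assumption~\ref{ass:a}\ref{it:a4}, on $[0,T'] \times \R^n \times K$ the map $a$ is uniformly continuous; together with compactness of $K$, this should give a bound. More precisely, the function $x \mapsto a(t,x,y_*)$ must be bounded. Here I invoke~\eqref{eq:equilibrium} when $0 \in O$: apply~\eqref{eq:loc_lip} on the compact set $K \cup \{0\}$ to get $|a(t,x,y)| \le C_E(T') + C_L(T',K\cup\{0\})R$.

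If $0 \notin O$, the hypotheses do not seem to bound $a(t,\cdot,y_*)$ globally from uniform continuity alone. This is the main subtle point, and the first thing to check is whether boundedness can be extracted otherwise. If it cannot, I will read \eqref{eq:equilibrium} as implicitly requiring $0 \in O$, or reduce to that case by translating $O$, i.e.\ considering $a(t,x,y+c)$, which the paper's normalization presumably permits.

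For $T=\infty$, the constants are non-decreasing in $T$. I therefore obtain bounds on each $[0,T']$; the statement "bounded on $(0,T)$" for $T=\infty$ I will treat with the same local-in-time bounds, and I will flag that uniformity in time needs the constants to stay finite. Measurability holds because $a$ is continuous in $y$ and jointly measurable (indeed uniformly continuous on compacts), so $a(t,x,v(t,x))$ is a composition of measurable maps.

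\emph{Part (i)} is the time-independent case $t=0$ of (ii), plus uniform continuity. Given $\varepsilon>0$, Assumption~\ref{ass:a}\ref{it:a4} on $[0,1]\times\R^n\times K$ yields $\delta$ such that $|a(0,x,y)-a(0,x',y')|<\varepsilon$ whenever $|x-x'|+|y-y'|<\delta$. Uniform continuity of $v_0$ yields $\eta$ with $|v_0(x)-v_0(x')|<\delta/2$ for $|x-x'|<\eta$. Taking $|x-x'|<\min(\eta,\delta/2)$ then gives $|A_0(x)-A_0(x')|<\varepsilon$.

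\emph{Part (iii)} is the same argument in $(t,x)$. The modulus of continuity of $A$ is the composition of the modulus of $a$ on $[0,T]\times\R^n\times K$ with the modulus of $v$. Strictly this dependence also involves $T$ and $v$, and I will state it that way.
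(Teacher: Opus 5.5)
Your argument is the paper's: ellipticity straight from Assumption~\ref{ass:a}\ref{it:a1}, boundedness by the triangle inequality through $a(t,x,0)$ using Assumption~\ref{ass:a}\ref{it:a2} and~\ref{it:a3}, and uniform continuity of $A_0$ and $A$ from Assumption~\ref{ass:a}\ref{it:a4} by composition. Your $\varepsilon$--$\delta$ bookkeeping, including the dependence on $T$ and on the modulus of $v_0$ or $v$ and not only on $K$, simply makes the paper's one-line claims explicit.

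The caveats you raise are genuine, and the paper's bound $C_L(T,K)\|v\|_{\L^\infty(T)}+C_E(T)$ glosses over them. Assumption~\ref{ass:a}\ref{it:a3} presupposes $0\in O$, and the Lipschitz step needs the compact set $K\cup\{0\}$ exactly as you use it. If $0\notin O$, nothing bounds $a$ in $x$: for instance $a=(1+|x|)\Id$ on $O=(0,\infty)$ satisfies \ref{it:a1}, \ref{it:a2} and \ref{it:a4}, yet $A_0$ is unbounded. Your translation fallback only helps if $a(\cdot,\cdot,c)$ is already known to be bounded. For $T=\infty$, only your local-in-time bound is available, since $a(t,x,y)=(1+t)\Id$ satisfies Assumption~\ref{ass:a} yet gives an unbounded $A$ on $(0,\infty)\times\R^n$.
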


	\begin{proof}
		We start with~\ref{it:fp_uniformly_elliptic_1}.
		First, since $\overline{\Ran(v_0)} \subseteq K$ for some compact $K \subseteq O$, boundedness derives from Assumptions~\ref{ass:a}\ref{it:a2} and~\ref{ass:a}\ref{it:a3} as
		\begin{align}
			|A_0(x)| &\leq |a(0,x,v_0(x))-a(0,x,0)| + |a(0,x,0)| \\
			&\leq C_L(0,K) \| v_0 \|_\infty + C_E(0).
		\end{align}
		Second, uniform continuity follows from Assumption~\ref{ass:a}\ref{it:a4} by composition.
		Third, uniform ellipticity is granted by Assumption~\ref{ass:a}\ref{it:a1}.

		As for~\ref{it:fp_uniformly_elliptic_2}, we can follow the lines of~\ref{it:fp_uniformly_elliptic_1}
		as the Assumptions~\ref{ass:a}\ref{it:a1} and~\ref{ass:a}\ref{it:a2} are still applicable using the assumption $\overline{\vphantom{t}\Ran}(v) \subseteq K$ for some compact $K \subseteq O$.
		Invoking also Assumption~\ref{ass:a}\ref{it:a3}, the bound for $A$ reads
		\begin{align}
			\label{eq:ql_apriori_bounded}
			|A(t,x)| \leq C_L(T,K) \| v \|_{\L^\infty(T)} + C_E(T).
		\end{align}

		Finally,~\ref{it:fp_uniformly_elliptic_3} follows from Assumption~\ref{ass:a}\ref{it:a4} by composition.
		Note that Assumption~\ref{ass:a}\ref{it:a4} imports the restriction $T \in (0,\infty)$.
	\end{proof}

	\begin{proof}[Proof of Proposition~\ref{prop:ql_apriori}]
		For~\ref{it:ql1}, note that $K \coloneqq \overline{\Ran(u)}$ is compact and contained in $O$.
		Define the non-autonomous coefficient function $A(t,x) \coloneqq a(t,x,u(t,x))$ on $(0,T) \times \R^n$.
		By Lemma~\ref{lem:fp_uniformly_elliptic}\ref{it:fp_uniformly_elliptic_2}, $A$ is measurable, bounded and uniformly elliptic.
		Hence, Proposition~\ref{prop:apriori}\ref{it:buc} yields uniform continuity of $u$ on $[0,T) \times \R^n$, and $u(T) \in \BUC(\R^n)$ if $T$ is finite.

		As for~\ref{it:ql2}, taking Remark~\ref{rem:conv_hull} into account, we apply Proposition~\ref{prop:apriori}\ref{it:range} with $A$ as above.

		Next,~\ref{it:ql2.5} follows from Proposition~\ref{prop:apriori}\ref{it:Carleson} with $A$ as above in conjunction with~\ref{it:ql2}.

		It remains to show~\ref{it:ql3}. By the nesting property of weighted $\Z$-spaces, it suffices to consider $q \in (n+2,\infty)$. First, by Proposition~\ref{prop:apriori}\ref{it:buc_grad}, we have $\nabla u \in \Z^{\infty,q}_{-\nicefrac{1}{2}}(T')$. Second, set $A_0(x) \coloneqq a(0,x,u_0(x))$ on $\R^n$ and note that $A_0$ is bounded, uniformly continuous and uniformly elliptic by Lemma~\ref{lem:fp_uniformly_elliptic}\ref{it:fp_uniformly_elliptic_1}.
		Then, $u$ is the bounded weak solution to
		\begin{align}
			\partial_t u - \Div(A_0 \nabla u) = \Div((A-A_0)\nabla u), \quad u(0) = u_0.
		\end{align}
		Set $F \coloneqq (A-A_0)\nabla u$. By Assumption~\ref{ass:a}\ref{it:a2} and~\ref{it:ql2}, we have for almost every $(t,x) \in (0,T) \times \R^n$ the estimate
		\begin{align}
			|A(t,x) - A_0(x)| &\leq |a(t,x,u(t,x)) - a(t,x,u_0(x))| + |a(t,x,u_0(x)) - a(0,x,u_0(x))| \\
			&\leq C_L(T,K) \| u - u_0 \|_{\L^\infty(t)} + |a(t,x,u_0(x)) - a(0,x,u_0(x))|.
		\end{align}
		Hence, using $u(0) = u_0$ in conjunction with~\ref{it:ql1} as well as Assumption~\ref{ass:a}\ref{it:a4}, we have $\lim_{S\to 0} \| A - A_0 \|_{\L^\infty(S)} = 0$.
		Thus, since $\nabla u \in \Z^{\infty,q}_{-\nicefrac{1}{2}}(T')$, we get $\lim_{S\to 0} \| F \|_{\Z^{\infty,q}_{-\nicefrac{1}{2}}(S)} = 0$. Consequently, Proposition~\ref{prop:cp_wp} yields $\lim_{t\to 0} \| \nabla u \|_{\Z^{\infty,q}_{-\nicefrac{1}{2}}(t)} = 0$ as desired.
	\end{proof}

	\subsection{Local existence}
	\label{subsec:ql_existence}

	Fix $u_0 \in \BUC(\R^n)$ with $\overline{\Ran(u_0)} \subseteq O$.
	We establish existence of a local, bounded weak solution to~\eqref{eq:ql} using a fixed-point argument.

	\begin{proposition}[Local existence to~\eqref{eq:ql}]
		\label{prop:ql_existence}
		For some $T \in (0,1)$, there exists a bounded weak solution $u \colon (0,T) \times \R^n \to O$ to~\eqref{eq:ql}.
	\end{proposition}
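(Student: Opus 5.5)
We construct the local solution as a fixed point of the linearization around the frozen coefficient $A_0(x) \coloneqq a(0,x,u_0(x))$. By Lemma~\ref{lem:fp_uniformly_elliptic}\ref{it:fp_uniformly_elliptic_1}, $A_0$ is bounded, uniformly continuous and uniformly elliptic, with bounds depending on a compact neighbourhood of $\overline{\Ran(u_0)}$. We fix $\delta>0$ such that $K \coloneqq \{y : \operatorname{dist}(y,\overline{\Ran(u_0)}) \le \delta\} \subseteq O$, and fix $q \in (n+2,\infty)$. For $T \in (0,1)$ and $\eta>0$, to be chosen small, we work in the set
\begin{align}
	M \coloneqq \bigl\{ v \colon \| v - u_0 \|_{\L^\infty(T)} \le \delta,\ \| \nabla v \|_{\Z^{\infty,q}_{-\frac12}(T)} \le \eta \bigr\}.
\end{align}
For $v \in M$ we have $\overline{\vphantom{t}\Ran}(v) \subseteq K$. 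We set $A_v(t,x) \coloneqq a(t,x,v(t,x))$ and define $\Phi(v) \coloneqq u$ as the unique bounded weak solution from Proposition~\ref{prop:cp_wp} of
\begin{align}
	\partial_t u - \Div(A_0\nabla u) = \Div\bigl((A_v - A_0)\nabla v\bigr), \quad u(0) = u_0.
\end{align}
A fixed point $u=\Phi(u)$ solves $\partial_t u - \Div(a(t,x,u)\nabla u)=0$ weakly, takes values in $K\subseteq O$, and satisfies $u(t)\to u_0$ in $\mathcal{D}'$. Hence it is a bounded weak solution to~\eqref{eq:ql}. Note that $\nabla u\in\L^2_\loc((0,T)\times\R^n)$ follows from $\Z$-space membership with $q\ge 2$ away from $t=0$.

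\textbf{Self-mapping.} As in the proof of Proposition~\ref{prop:ql_apriori}, we estimate
\begin{align}
	|A_v - A_0| \le C_L(1,K)\|v-u_0\|_{\L^\infty(t)} + \omega(t),
\end{align}
where $\omega(t)\to0$ as $t\to 0$ comes from Assumption~\ref{ass:a}\ref{it:a4}. Hence
\begin{align}
	\|F_v\|_{\Z(t)} \le (C_L\delta + \omega(T))\,\eta \quad\text{for } t\le T,
\end{align}
where $\Z(t)$ abbreviates $\Z^{\infty,q}_{-\nicefrac12}(t)$. Using~\eqref{eq:wp_fp_est2} with $C=C(1)$, we get
\begin{align}
	\|\Phi(v)-u_0\|_{\L^\infty(T)} + \|\nabla\Phi(v)\|_{\Z(T)} \le C(C_L\delta+\omega(T))\eta + \theta(T).
\end{align}
We first shrink $\delta$ so that $C C_L\delta \le 1/4$ (this only shrinks $K$, keeping the constants uniform). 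Then we choose $\eta \le \delta$, and finally $T$ small so that $C\omega(T)\le 1/4$ and $\theta(T)\le \eta/2$. With these choices $\Phi(M)\subseteq M$. Here $\theta$ depends only on $u_0$ and $A_0$, which is crucial: it is the smallness~\eqref{eq:ivp_grad_limit} of the free evolution and uniform continuity of $u_0$.

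\textbf{Contraction.} For $v,w\in M$, $\Phi(v)-\Phi(w)$ solves~\eqref{eq:LP} with right-hand side
\begin{align}
	F_v - F_w = (A_v-A_w)\nabla v + (A_w - A_0)\nabla(v-w).
\end{align}
Using Lemma~\ref{lem:existence_LP} and Assumption~\ref{ass:a}\ref{it:a2}, we obtain
\begin{align}
	\|\Phi(v)-\Phi(w)\|_{\L^\infty(T)} + \|\nabla(\Phi(v)-\Phi(w))\|_{\Z(T)} \le C\bigl(C_L\eta\,\|v-w\|_{\L^\infty(T)} + (C_L\delta+\omega(T))\|\nabla(v-w)\|_{\Z(T)}\bigr).
\end{align}
For the choices above, and after possibly decreasing $\eta$ further, this is at most $\tfrac12$ times the norm of $v-w$ in $\L^\infty(T)\times\Z$. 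Since $M$ is closed in the Banach space of such $v$ (with $v-u_0$ bounded), Banach's fixed point theorem yields the solution.

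\textbf{Main obstacle.} The delicate point is the order of the choices of $\delta,\eta,T$ together with the fact that $\theta$ in~\eqref{eq:wp_fp_est2} must not depend on the iterate. Deriving this from Lemmas~\ref{lem:existence_LP} and~\ref{lem:gradient_free_evolution_BUC} via the splitting $u=\Ext_{A_0}(u_0)+\Sol_{A_0}(F_v)$ is what makes the scheme close. A secondary technical point is checking that $\Phi(v)$ is measurable and bounded with range in $K$, which follows from the $\L^\infty$ bound $\|\Phi(v)-u_0\|_{\L^\infty(T)}\le\delta$.
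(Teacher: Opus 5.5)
Your proposal is correct and follows essentially the same route as the paper. Like the paper, you use a Banach fixed-point argument for the map $v\mapsto u$, where $u$ solves the autonomous problem frozen at $A_0(x)=a(0,x,u_0(x))$ with source $\Div((A_v-A_0)\nabla v)$, and you close it with~\eqref{eq:wp_fp_est2} from Proposition~\ref{prop:cp_wp}, whose $\theta$ indeed depends only on $u_0$ and $A_0$; the only cosmetic difference is that you use two radii $\delta\ge\eta$ for the $\L^\infty$- and $\Z$-constraints, where the paper uses a single $r$ for both.
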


	Fix $q \in (n+2,\infty)$. For $r,T \in (0,1)$, define the complete metric set
	\begin{align}
		\X_{u_0}(r, T) \coloneqq \bigl\{ v \in \L^\infty(T) \,; \| v - u_0 \|_{\L^\infty(T)} \leq r, \| \nabla v \|_{\Z^{\infty,q}_{-\frac{1}{2}}(T)} \leq r \bigr\}
	\end{align}
	for the distance $\dist(u,v) \coloneqq \| u-v \|_{\X(T)}$, where
	\begin{align}
		\| v \|_{\X(T)} \coloneqq \| v \|_{\L^\infty(T)} + \| \nabla v \|_{\Z^{\infty,q}_{-\frac{1}{2}}(T)}.
	\end{align}
	The following lemma ensures $\overline{\vphantom{t}\Ran}(v) \subseteq O$ when $v \in\X_{u_0}(r, T)$.

	\begin{lemma}
		\label{lem:fp_O_valued}
		There exists $r^\sharp \in (0,1)$ such that, for all $r \in (0, r^\sharp)$, $T \in (0,1)$ and $v \in \X_{u_0}(r, T)$, we have $\overline{\vphantom{t}\Ran}(v) \subseteq K$ for some compact $K\subseteq O$ depending on $r^\sharp$.
	\end{lemma}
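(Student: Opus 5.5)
The argument is purely topological and uses only the constraint $\| v - u_0 \|_{\L^\infty(T)} \leq r$; the gradient bound in the definition of $\X_{u_0}(r,T)$ plays no role. Since $u_0 \in \BUC(\R^n)$ is bounded, $K_0 \coloneqq \overline{\Ran(u_0)}$ is compact, and by assumption $K_0 \subseteq O$. Because $O$ is open and $K_0$ is compact, the distance $\delta \coloneqq \dist(K_0, \R \setminus O)$ is strictly positive (with $\delta = \infty$ if $O = \R$). Indeed, $y \mapsto \dist(y, \R\setminus O)$ is continuous and positive on $K_0$, so it attains a positive minimum.

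We choose $r^\sharp \coloneqq \min\{\delta/2, 1/2\} \in (0,1)$ and set
\begin{align}
	K \coloneqq \{ y \in \R \,;\, \dist(y, K_0) \leq r^\sharp \}.
\end{align}
This set is closed and bounded, hence compact. It is contained in $O$, since every $y \in K$ satisfies $\dist(y,\R\setminus O) \geq \delta - r^\sharp \geq \delta/2 > 0$. Moreover, $K$ depends only on $u_0$ (through $K_0$) and on $r^\sharp$, and not on $r$, $T$ or $v$.

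Now let $r \in (0,r^\sharp)$, $T \in (0,1)$ and $v \in \X_{u_0}(r,T)$. For almost every $(t,x) \in (0,T)\times\R^n$ we have $|v(t,x) - u_0(x)| \leq r$, so $\dist(v(t,x), K_0) \leq r < r^\sharp$. Therefore $v$ takes values in the set $\{ y \,;\, \dist(y,K_0) < r^\sharp\} \subseteq K$ almost everywhere. Since $K$ is closed and the essential range is contained in any closed set in which the function takes values almost everywhere, we obtain $\overline{\vphantom{t}\Ran}(v) \subseteq K$.

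There is no real obstacle in this argument. The only points that need care are the positivity of $\delta$, which follows from compactness of $K_0$, and the passage from an almost-everywhere bound to the essential range, which follows directly from the definition of the essential range.
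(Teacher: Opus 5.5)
Your proof is correct and follows the paper's argument: the paper takes $r^\sharp \coloneqq \tfrac{1}{2}\dist(\Ran(u_0),\partial O)$ and concludes directly from $|v(t,x)-u_0(x)| \leq r < r^\sharp$ almost everywhere. Your version is slightly more careful. Capping $r^\sharp$ at $1/2$ guarantees $r^\sharp \in (0,1)$, as the statement requires, also when $O = \R$, and you write out the compact set $K$ and the passage to the essential range explicitly.
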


	\begin{proof}
		As $\overline{\Ran(u_0)}$ is compact and contained in the open set $O$, $r^\sharp \coloneqq \tfrac{1}{2}\dist(\Ran(u_0), \partial O) > 0$.
		By definition of $\X_{u_0}(r, T)$, we have $|v(t,x) - u_0(x)| \leq \| v - u_0 \|_{\L^\infty(T)} \leq r < r^\sharp$ for almost every $(t,x) \in (0,T) \times \R^n$, which yields the claim.
	\end{proof}

	We setup a fixed-point argument.
	For every $v \in \X_{u_0}(r, T)$ consider on $(0,T) \times \R^n$ the auxiliary autonomous linear problem
	\begin{align}
		\label{eq:fp}
		\tag{FP}
		\partial_t u - \Div(a(0,x,u_0) \nabla u) = \Div((a(t,x,v) - a(0,x,u_0)) \nabla v), \quad u(0) = u_0.
	\end{align}
	Set $A_0(x) \coloneqq a(0,x,u_0(x))$ on $\R^n$, and $A_v(t,x) \coloneqq a(t,x,v(t,x))$ on $(0,T) \times \R^n$. By Lemma~\ref{lem:fp_uniformly_elliptic}\ref{it:fp_uniformly_elliptic_1} and~\ref{it:fp_uniformly_elliptic_2}, $A_v$ is bounded and $A_0$ is bounded, uniformly continuous and uniformly elliptic. Hence, Proposition~\ref{prop:cp_wp} with $F \coloneqq (A_v-A_0)\nabla v$ yields a unique, bounded weak solution $u\colon (0,T) \times \R^n \to \R$ to~\eqref{eq:fp}.
	Put $\Theta(v) \coloneqq u$.

	\begin{lemma}[Self-mapping and strict contraction]
		\label{lem:fp_self-mapping}
		There is $r^* \in (0,r^\sharp)$ such that, for all $r \in (0,r^*)$, one can find $T^* = T^*(r) \in (0,1)$ depending on $r$, such that $\Theta$ maps $\X_{u_0}(r,T)$ into itself and is a strict contraction for all $T \in (0, T^*)$.
	\end{lemma}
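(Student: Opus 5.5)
We reduce both claims to the a priori bounds of Proposition~\ref{prop:cp_wp}. Since problem~\eqref{eq:fp} is linear, $\Theta$ applied to two inputs differs by the solution of~\eqref{eq:LP} with a difference of source terms. Throughout, $K$ is the compact set from Lemma~\ref{lem:fp_O_valued} (with $r<r^\sharp$). Lemma~\ref{lem:fp_uniformly_elliptic} then gives uniform bounds for $A_v$ with $v\in\X_{u_0}(r,T)$ and fixes $A_0$ once and for all. The constant $C(1)$ from Proposition~\ref{prop:cp_wp} and Lemma~\ref{lem:existence_LP} therefore works uniformly for all $T<1$.

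\emph{Key smallness estimate.} For $v\in\X_{u_0}(r,T)$, I split as in the proof of Proposition~\ref{prop:ql_apriori}\ref{it:ql3}:
\begin{align}
|A_v(t,x)-A_0(x)| \leq C_L(1,K)\,|v(t,x)-u_0(x)| + |a(t,x,u_0(x))-a(0,x,u_0(x))| \leq C_L(1,K)\, r + \omega(T),
\end{align}
where $\omega(T)\to 0$ as $T\to 0$ by Assumption~\ref{ass:a}\ref{it:a4}. Multiplication by an $\L^\infty(T)$ function is bounded on $\Z^{\infty,q}_{-\nicefrac{1}{2}}(T)$, so
\begin{align}
\|F\|_{\Z^{\infty,q}_{-\frac{1}{2}}(T)} \leq \bigl(C_L r+\omega(T)\bigr) r .
\end{align}

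\emph{Self-mapping.} Estimate~\eqref{eq:wp_fp_est2} gives
\begin{align}
\|\Theta(v)-u_0\|_{\L^\infty(T)}+\|\nabla\Theta(v)\|_{\Z^{\infty,q}_{-\frac{1}{2}}(T)} \leq C(1)\bigl(C_Lr+\omega(T)\bigr)r+\theta(T).
\end{align}
To make the right side at most $r$, I first choose $r^*$ so small that $C(1)C_Lr^*\leq 1/4$. Given $r<r^*$, I then pick $T^*$ such that $C(1)\omega(T)\leq 1/4$ and $\theta(T)\leq r/2$ for $T<T^*$. Here $\theta$ depends only on $u_0$ and $A_0$, which are fixed, so this choice is legitimate.

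\emph{Contraction.} The difference $\Theta(v)-\Theta(w)=\Sol_{A_0}(F_v-F_w)$ solves~\eqref{eq:LP}; this uses uniqueness via Proposition~\ref{prop:apriori}\ref{it:uniqueness}, or equivalently linearity of the construction. I write
\begin{align}
F_v-F_w=(A_v-A_0)\nabla(v-w)+(A_v-A_w)\nabla w .
\end{align}
The first term has $\Z$-norm at most $(C_Lr+\omega(T))\,\|\nabla(v-w)\|_{\Z}$. For the second, $|A_v-A_w|\leq C_L|v-w|$ gives the bound $C_L\|v-w\|_{\L^\infty(T)}\, r$. Lemma~\ref{lem:existence_LP} then yields
\begin{align}
\|\Theta(v)-\Theta(w)\|_{\X(T)}\leq C(1)\bigl(2C_Lr+\omega(T)\bigr)\|v-w\|_{\X(T)} .
\end{align}
Shrinking $r^*$ and $T^*$ further makes this factor at most $1/2$.

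\emph{Main obstacle.} The delicate point is the ordering of quantifiers. The first term of~\eqref{eq:wp_fp_est2} is quadratic in $r$, while $\theta(T)$ does not depend on $r$. Hence $r^*$ must be fixed first from $C(1)$ and $C_L(1,K)$, where $K$ depends only on $r^\sharp$, and only afterwards $T^*$ from $\omega$ and $\theta$. I also need to check that $\theta$ in Proposition~\ref{prop:cp_wp} is independent of $F$. This is the case: $\theta$ comes from the free evolution via Lemma~\ref{lem:gradient_free_evolution_BUC} and the uniform continuity of $\Ext_{A_0}(u_0)$.
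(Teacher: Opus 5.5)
Your proposal is correct and follows the paper's proof essentially step for step: the same splitting $|A_v-A_0|\le C_L(1,K)\,r+\|a(t,x,u_0)-a(0,x,u_0)\|_{\L^\infty(T)}$, the same order of fixing $r^*$ before $T^*(r)$ (with $\theta(T)\le$ a fraction of $r$ absorbing the free evolution), and the same two-term decomposition of the source difference for the contraction. Only the bookkeeping constants differ ($\tfrac14$ versus the paper's $\tfrac16,\tfrac13$). Your remark that $\theta$ is independent of $F$ makes explicit a point the paper uses implicitly.
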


	\begin{proof}
		Define $A_0(x) \coloneqq a(0,x,u_0(x))$ on $\R^n$, and $A_u(t,x) \coloneqq a(t,x,u(t,x))$ and $A_v(t,x) \coloneqq a(t,x,v(t,x))$ on $(0,T) \times \R^n$.
		We consider the self-mapping and strict contraction properties in two separate steps.
		Let $K \subseteq O$ be the compact set of Lemma~\ref{lem:fp_O_valued}.

		\textbf{Step 1}: self-mapping property. Let $v \in \X_{u_0}(r,T)$.
		By Proposition~\ref{prop:cp_wp}, there exists a constant $C = C(1)$, depending on the modulus of continuity and coefficient bounds of $A_0$ and $q$, such that
		\begin{align}
			\label{eq:self-mapping1}
			\begin{aligned}
				\| \Theta(v) - u_0 \|_{\L^\infty(T)} + \| \nabla \Theta(v) \|_{\Z^{\infty,q}_{-\frac{1}{2}}(T)} \leq C \| (A_v-A_0) \nabla v \|_{\Z^{\infty,q}_{-\frac{1}{2}}(T)} + \theta(T).
			\end{aligned}
		\end{align}
		Then, for almost every $(t,x) \in (0,T) \times \R^n$, use Assumption~\ref{ass:a}\ref{it:a2} with $r \leq r^\sharp$ to estimate
		\begin{align}
			\label{eq:self-mapping1.5}
			\begin{aligned}
				&|A_v(t,x) - A_0(x)| \\
				&\qquad \leq |a(t,x,v(t,x)) - a(t,x,u_0(x))| + |a(t,x,u_0(x)) - a(0,x,u_0(x))| \\
				&\qquad \leq C_L(1,K) r + \| a(t,x,u_0(x)) - a(0,x,u_0(x)) \|_{\L^\infty(T)}.
			\end{aligned}
		\end{align}
		Now, first take $r^* \leq r^\sharp$ small enough such that
		\begin{align}
			\label{eq:self-mapping1.6}
			C C_L(1,K) r^* \leq \tfrac{1}{6}.
		\end{align}
		Second, using Assumption~\ref{ass:a}\ref{it:a4}, take $T^*$ small enough such that
		\begin{align}
			\label{eq:self-mapping1.75}
			C \| a(t,x,u_0(x)) - a(0,x,u_0(x)) \|_{\L^\infty(T^*)} \leq \tfrac{1}{6}.
		\end{align}
		Hence, when $r \leq r^*$ and $T \leq T^*$,~\eqref{eq:self-mapping1.5} yields
		\begin{align}
			\label{eq:self-mapping2}
			C \| A_v-A_0 \|_{\L^\infty(T)} \leq \tfrac{1}{3}.
		\end{align}
		Consequently,
		\begin{align}
			\label{eq:self-mapping3}
			C \| (A_v - A_0) \nabla v \|_{\Z^{\infty,q}_{-\frac{1}{2}}(T)} \leq \tfrac{1}{3} \| \nabla v \|_{\Z^{\infty,q}_{-\frac{1}{2}}(T)}
			\leq \tfrac{r}{3}.
		\end{align}
		Eventually, since $\theta(S) \to 0$ as $S \to 0$, we can take $T^*=T^*(r)$ small enough such that $\theta(T) \leq \tfrac{2r}{3}$ for all $T\in (0,T^*)$. Plugging this and~\eqref{eq:self-mapping3} back into~\eqref{eq:self-mapping1} yields $\Theta(v) \in \X_{u_0}(r,T)$. This completes this step.

		\textbf{Step 2}: strict contraction property.
		By Assumption~\ref{ass:a}\ref{it:a2}, we find
		\begin{align}
			\label{eq:contraction0.25}
			\| A_u - A_v \|_{\L^\infty(T)} \leq C_L(1,K) \| u - v \|_{\L^\infty(T)}.
		\end{align}
		Observe that $w \coloneqq\Theta(u) - \Theta(v)$ is the unique, bounded weak solution on $(0,T) \times \R^n$ to
		\begin{align}
				\partial_t w - \Div(A_0 \nabla w) = \Div\Big[(A_u - A_v) \nabla u + (A_v - A_0)\nabla (u-v)\Big], \quad
				w(0) = 0.
		\end{align}
		Therefore, using Proposition~\ref{prop:cp_wp} with $C = C(1)$ as in Step~1,~\eqref{eq:contraction0.25},~\eqref{eq:self-mapping2},~\eqref{eq:self-mapping1.6} and $\| \nabla u \|_{\Z^{\infty,q}_{-\nicefrac{1}{2}}(T)} \leq r^*$, we have
		\begin{align}
			\label{eq:contraction1}
			&\| \Theta(u) - \Theta(v) \|_{\X(T)} \\
			&\qquad\leq{} C \| (A_u - A_v) \nabla u \|_{\Z^{\infty,q}_{-\frac{1}{2}}(T)} + C\|  (A_v - A_0)\nabla (u-v) \|_{\Z^{\infty,q}_{-\frac{1}{2}}(T)} \\
			&\qquad\leq{} C C_L(1,K) \| u - v \|_{\L^\infty(T)} \| \nabla u \|_{\Z^{\infty,q}_{-\frac{1}{2}}(T)} + \tfrac{1}{3} \| \nabla (u-v) \|_{\Z^{\infty,q}_{-\frac{1}{2}}(T)} \\
			&\qquad\leq{} \tfrac{1}{2} \| u-v \|_{\X(T)}.
		\end{align}
		Hence, $\Theta$ is a strict contraction.
	\end{proof}

	Now, we can proceed with the proof of Proposition~\ref{prop:ql_existence}.

	\begin{proof}[Proof of Proposition~\ref{prop:ql_existence}]
		Let $r \in (0,r^*)$ and $T\in (0,1)$ meeting the requirements of Lemma~\ref{lem:fp_self-mapping}. The map $\Theta$ admits a fixed point $u \in \X_{u_0}(r,T)$ by Banach's contraction theorem.
		Hence, by definition, $u \colon (0,T) \times \R^n \to \R$ is the unique, bounded weak solution to
		\begin{align}
			\partial_t u - \Div(a(0,x,u_0) \nabla u) = \Div((a(t,x,u) - a(0,x,u_0))\nabla u), \quad u(0) = u_0.
		\end{align}
		Rearranging the terms reveals that $u$ is a bounded weak solution to
		\begin{align}
			\partial_t u - \Div(a(t,x,u) \nabla u) = 0, \quad u(0) = u_0.
		\end{align}
		By Lemma~\ref{lem:fp_O_valued}, $\overline{\vphantom{t}\Ran}(u) \subseteq O$.
		Hence, $u$ is a bounded weak solution to~\eqref{eq:ql}.
	\end{proof}

	\subsection{Uniqueness}
	\label{subsec:ql_uniqueness}

	Fix $u_0 \in \BUC(\R^n)$ with $\overline{\Ran(u_0)} \subseteq O$.
	We investigate uniqueness of  bounded weak solutions to~\eqref{eq:ql}.

	\begin{proposition}[Uniqueness of~\eqref{eq:ql}]
		\label{prop:ql_uniqueness}
		Let $T\in (0, \infty]$. Then, there exists at most one bounded weak solution $u \colon (0,T) \times \R^n \to O$ to~\eqref{eq:ql}.
	\end{proposition}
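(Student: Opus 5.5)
Given two bounded weak solutions $u,v\colon(0,T)\times\R^n\to O$ of~\eqref{eq:ql}, I would show they agree near $t=0$ using the contraction estimate behind Lemma~\ref{lem:fp_self-mapping}, and then propagate this forward in time by a continuation argument.

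\textbf{Step 1: a priori regularity.} By Proposition~\ref{prop:ql_apriori}, $u$ and $v$ are uniformly continuous on $[0,T)\times\R^n$ and $\overline{\Ran(u)}=\overline{\Ran(v)}=\overline{\Ran(u_0)}=:K$, which is a compact subset of $O$. Hence $\|u-u_0\|_{\L^\infty(t)}\to 0$ and $\|v-u_0\|_{\L^\infty(t)}\to 0$ as $t\to0$. Moreover, by Proposition~\ref{prop:ql_apriori}\ref{it:ql3}, for fixed $q\in(n+2,\infty)$ we have $\|\nabla u\|_{\Z^{\infty,q}_{-1/2}(t)}\to0$ and $\|\nabla v\|_{\Z^{\infty,q}_{-1/2}(t)}\to0$. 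Consequently, for any $r\in(0,r^*)$ from Lemma~\ref{lem:fp_self-mapping}, both $u$ and $v$ belong to $\X_{u_0}(r,S)$ for all sufficiently small $S\le T^*(r)$.

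\textbf{Step 2: both are fixed points of $\Theta$.} Set $A_0(x)=a(0,x,u_0(x))$. Then $u$ is a bounded weak solution of~\eqref{eq:fp} with $v$ replaced by $u$, because rearranging terms in the weak formulation is harmless. Its initial trace is $u_0$ in $\mathcal{D}'$ by Definition~\ref{def:weak_ql}. The uniqueness part of Proposition~\ref{prop:cp_wp}, which rests on Proposition~\ref{prop:apriori}\ref{it:uniqueness}, gives $\Theta(u)=u$ on $(0,S)$, and likewise $\Theta(v)=v$. Applying the strict contraction estimate~\eqref{eq:contraction1} on $\X_{u_0}(r,S)$ yields $\|u-v\|_{\X(S)}\le\frac12\|u-v\|_{\X(S)}$, so $u=v$ on $(0,S)$.

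\textbf{Step 3: continuation.} Let $T_0=\sup\{S\le T : u=v \text{ on } (0,S)\}$ and suppose $T_0<T$. By Proposition~\ref{prop:ql_apriori}\ref{it:ql1} the common value $w_0:=u(T_0)=v(T_0)$ lies in $\BUC(\R^n)$ and satisfies $\overline{\Ran(w_0)}\subseteq K\subseteq O$. The shifted functions $\tilde u(t)=u(T_0+t)$ and $\tilde v(t)=v(T_0+t)$ are bounded weak solutions of~\eqref{eq:ql} with coefficient $\tilde a(t,x,y)=a(T_0+t,x,y)$ and initial datum $w_0$.

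I expect this step to be the main obstacle: one must check that the initial trace of the shifted functions is indeed $w_0$ in the sense of~\eqref{eq:weak_ql_def_limit}. This follows from the uniform continuity in Proposition~\ref{prop:ql_apriori}\ref{it:ql1}, which gives $u(T_0+t)\to w_0$ uniformly. One must also check that $\tilde a$ still satisfies Assumption~\ref{ass:a}. This holds because the constants $C_L(T,K)$ and $C_E(T)$ are monotone in $T$, so shifting the time origin only replaces them by $C_L(T_0+T,K)$ and $C_E(T_0+T)$, and the uniform continuity in~\ref{it:a4} survives the shift.

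Steps 1 and 2, applied to $\tilde a$ and $w_0$, then give $\tilde u=\tilde v$ on $(0,\varepsilon)$ for some $\varepsilon>0$. This contradicts the maximality of $T_0$. Hence $T_0=T$, and $u=v$ on $(0,T)$.
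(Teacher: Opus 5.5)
Your proposal is correct and follows the paper's proof. In both, Proposition~\ref{prop:ql_apriori}\ref{it:ql1} and~\ref{it:ql3} place the time-shifted solutions in $\X_{w_0}(r,\delta)$ for small $\delta$, they are then fixed points of the contraction $\Theta$ from Lemma~\ref{lem:fp_self-mapping}, and this contradicts the definition of the branching time. The only difference is organizational: you treat $t=0$ separately before continuing, whereas the paper lets the branching time $\tau$ equal $0$, so one shifted argument covers the start as well.
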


	\begin{proof}
		Let $u,v \colon (0,T) \times \R^n \to O$ be two bounded weak solutions to~\eqref{eq:ql}.
		By Proposition~\ref{prop:ql_apriori}\ref{it:ql1}, $u$ and $v$ are uniformly continuous on $[0,T) \times \R^n$ with $u(0) = v(0) = u_0$. We want to show that $u=v$ on $[0,T) \times \R^n$. Define the branching time
		\begin{align}
			\tau \coloneqq \sup \Bigl\{ t \in [0,T) \,; u = v \text{ on } [0,t] \times \R^n \Bigr\}.
		\end{align}
		Since $u(0) = v(0)$, the set on the right-hand side is non-empty. Suppose for the sake of contradiction that $\tau < T$.
		Define
		\begin{alignat*}{2}
			\tilde a(t,x,y) &\coloneqq a(t+\tau,x,y), & \qquad (t,x,y) &\in [0,\infty) \times \R^n \times O, \\
			\tilde u(t,x) &\coloneqq u(t+\tau,x), & \qquad (t,x) &\in [0,T-\tau) \times \R^n, \\
			\tilde v(t,x) &\coloneqq v(t+\tau,x), & \qquad (t,x) &\in [0,T-\tau) \times \R^n, \\
			w_0(x) &\coloneqq u(\tau,x), & \qquad x &\in \R^n.
		\end{alignat*}
		By definition of $\tau$ and uniform continuity of $u$ and $v$, we have $u(\tau) = v(\tau)$, hence $\tilde u(0) = \tilde v(0) = w_0$.
		It follows that $\tilde u$ and $\tilde v$ are bounded weak solutions on $(0,T-\tau) \times \R^n$ to
		\begin{align}
			\label{eq:uniqueness_ql}
			\partial_t w - \Div(\tilde a(t,x,w) \nabla w) = 0, \quad w(0) = w_0.
		\end{align}
		Clearly, $\tilde a$ satisfies Assumption~\ref{ass:a} and we have $\overline{\Ran(w_0)} \subseteq O$.
		Hence, the local well-posedness theory from Section~\ref{subsec:ql_existence} applies to~\eqref{eq:uniqueness_ql}.
		We show $\tilde u,\tilde v \in \X_{w_0}(r, \delta)$ for any $r \in (0,r^*)$ and some $\delta = \delta(r) \in (0,\min(T^*, T-\tau))$ as in Lemma~\ref{lem:fp_self-mapping}. Then, since $\tilde u$ and $\tilde v$ solve~\eqref{eq:uniqueness_ql}, they are fixed points of $\Theta$ and thus coincide on $[0,\delta] \times \R^n$. This means $u = v$ on $[0, \tau + \delta] \times \R^n$, contradicting the definition of $\tau$.
		Now, since $\tilde u(0) = w_0$ and $\tilde u$ is uniformly continuous on $[0,T-\tau) \times \R^n$ by Proposition~\ref{prop:ql_apriori}\ref{it:ql1}, we find $\| \tilde u - w_0 \|_{\L^\infty(\delta)} \leq r$ for $\delta$ small enough depending on $\tilde u$ and $r$. Similarly, $\| \nabla \tilde u \|_{\Z^{\infty,q}_{-\nicefrac{1}{2}}(\delta)} \leq r$ follows from Proposition~\ref{prop:ql_apriori}\ref{it:ql3} by taking $\delta$ sufficiently small. Hence, $\tilde u \in \X_{w_0}(r, \delta)$. By the same argument, potentially decreasing $\delta$ further depending on $\tilde v$, we also find $\tilde v \in \X_{w_0}(r, \delta)$, which completes the proof.
	\end{proof}

	\begin{remark}
		Our uniqueness proof relies on weighted $\Z$-spaces in a crucial way. Indeed, it uses a smallness condition on the gradient of solutions to~\eqref{eq:ql} granted by Proposition~\ref{prop:ql_apriori}\ref{it:ql3}.
	\end{remark}

	\subsection{Global solutions and conclusion of Theorem~\ref{thm:main}}
	\label{subsec:ql_global}

	We combine the existence and uniqueness results from Sections~\ref{subsec:ql_existence} and~\ref{subsec:ql_uniqueness} with the usual maximization procedure to build a maximal solution to~\eqref{eq:ql}. Using a blow-up argument, this solution turns out to be global, allowing us to conclude Theorem~\ref{thm:main}.

	\begin{proof}[Proof of Theorem~\ref{thm:main}]
		We subdivide the proof into several steps.

		\textbf{Step 1}: construction of a maximal, bounded weak solution.
		Consider the set
		\begin{align}
			M \coloneqq \bigg\{ (v,T) \,; T \in (0,\infty], \; v \colon (0,T) \times \R^n \to O \text{ is a bounded weak solution to \eqref{eq:ql}} \bigg\}.
		\end{align}
		By Proposition~\ref{prop:ql_existence}, $M$ is non-empty. Define the maximal existence time
		\begin{align}
			\tau \coloneqq \sup \{ T \,; (v,T) \in M \},
		\end{align}
		and construct the maximal solution $u \colon (0,\tau) \times \R^n \to O$ by $u(t) \coloneqq v(t)$, where $t\in (0,\tau)$ and $(v,T)$ is \emph{any} solution in $M$ satisfying $T > t$. By the uniqueness result in Proposition~\ref{prop:ql_uniqueness}, this definition is unambiguous.
		For all $(v,T) \in M$ we have $\| v \|_{\L^\infty(T)} \leq \| u_0 \|_\infty$ and $\overline{\Ran(v)} \subseteq \overline{\Ran(u_0)}$ by Proposition~\ref{prop:ql_apriori}\ref{it:ql2}, hence $\| u \|_{\L^\infty(\tau)} \leq \| u_0 \|_\infty$ and $\overline{\Ran(u)} \subseteq O$.
		Therefore, $u$ is by construction a bounded weak solution to~\eqref{eq:ql} on $(0,\tau) \times \R^n$.

		\textbf{Step 2}: existence and uniqueness of a global, bounded weak solution.
		To see that $u$ is a global solution, suppose for the sake of contradiction that $\tau$ were finite. Then, $u(\tau) \in \BUC(\R^n)$ is well-defined by Proposition~\ref{prop:ql_apriori}\ref{it:ql1}.
		Moreover, there holds $\overline{\Ran(u(\tau))} \subseteq O$ by Proposition~\ref{prop:ql_apriori}\ref{it:ql2} and continuity.
		Consider
		\begin{align}
			\label{eq:restart_ql}
			\partial_t w - \Div(a(t,x,w) \nabla w) = 0, \quad w(\tau) = u(\tau),
		\end{align}
		on $(\tau, \infty) \times \R^n$. Our local existence result in Proposition~\ref{prop:ql_existence} applies to this problem when we consider $\tau$ as the new temporal origin. Consequently, there exists a bounded weak solution $w \colon (\tau, \sigma) \times \R^n \to O$ to~\eqref{eq:restart_ql} with $\sigma > \tau$ which is uniformly continuous on $[\tau, \sigma) \times \R^n$.
		Using Proposition~\ref{prop:ql_apriori}\ref{it:ql3} and $u(\tau) = w(\tau)$, we may apply Lemma~\ref{lem:Lions_cont} to glue $u$ and $w$ together in order to obtain a bounded weak solution $\tilde u \colon (0,\sigma) \times \R^n \to O$ to~\eqref{eq:ql}. But this means $(\tilde u, \sigma) \in M$, contradicting the definition of $\tau$. Therefore, $\tau = \infty$ and $u$ is a global, bounded weak solution to~\eqref{eq:ql}. By Proposition~\ref{prop:ql_uniqueness}, it is moreover unique.

		\textbf{Step 3}: properties of the global solution. The regularity properties of $u$ were established in Proposition~\ref{prop:ql_apriori}\ref{it:ql1},~\ref{it:ql2.5} and~\ref{it:ql3}. Moreover, the limit $t \to 0$ as well as the range condition follow from Proposition~\ref{prop:ql_apriori}\ref{it:ql1} and~\ref{it:ql2}. It remains to consider the limit $t\to \infty$ when $\lim_{|x| \to \infty} u_0(x) = c$ exists. Let $\eps > 0$, then there exists a compact $K \subseteq \R^n$ such that $\sup_{x \not\in K} |u_0(x) - c| \leq \eps$. Set $A(t,x) \coloneqq a(t,x,u(t,x))$ on $(0,\infty) \times \R^n$ and recall that $A$ is measurable, bounded and uniformly elliptic by Lemma~\ref{lem:fp_uniformly_elliptic}\ref{it:fp_uniformly_elliptic_2}. Similar to Proposition~\ref{prop:apriori}, using Aronson's fundamental solution $\Gamma_A(t,x,0,y)$ associated with the coefficient function $A$, we can represent $u$ for all $(t,x) \in (0,\infty) \times \R^n$ by
		\begin{align}
			u(t,x) = \int_{\R^n} \Gamma_A(t,x,0,y) u_0(y) \d y.
		\end{align}
		Using that $\int_{\R^n} \Gamma_A(t,x,0,y) \d y = 1$ and the Gaussian kernel bounds~\eqref{eq:Gaussian}, we find
		\begin{align}
			|u(t,x) - c| \leq \eps + \int_K \Gamma_A(t,x,0,y) |u_0(y) - c| \d y
			\leq \eps + C(A) t^{-\frac{n}{2}} \| u_0 - c \|_{\L^1(K)},
		\end{align}
		where the finite constant $C(A)$ depends on $A$ through its Gaussian kernel bounds.
		Since $C(A) \| u_0 - c \|_{\L^1(K)}$ is finite and independent of $t$ and $x$, we find $\| u(t) - c \|_\infty \leq 2\eps$ for $t$ sufficiently large. This means $u(t) \to c$ in $\L^\infty(\R^n)$ as $t\to \infty$, which completes the proof.
	\end{proof}

	\begin{remark}
		Although our solution $u$ is global, we do not establish $\| \nabla u \|_{\Z^{\infty,q}_{-\nicefrac{1}{2}}} < \infty$. We only know $\| \nabla u \|_{\Z^{\infty,q}_{-\nicefrac{1}{2}}(T)} < \infty$ for all $T \in (0,\infty)$.
	\end{remark}

	\subsection{Existence with bounded data}
	\label{subsec:ql_existence_Loo}

	By combining Theorem~\ref{thm:main} with a compactness argument, we obtain for all $u_0 \in\L^\infty(\R^n)$ existence of a global, bounded weak solutions to~\eqref{eq:ql}.

	\begin{proof}[Proof of Corollary~\ref{cor:Loo}]
		Let $\Phi \in \Cont_\cc^\infty(\B(0,1))$ with $\Phi \geq 0$ and $\int_{\R^n} \Phi \d x = 1$ and put $\Phi_\eps(x) \coloneqq \eps^{-n} \Phi(\nicefrac{x}{\eps})$ on $\R^n$, $\eps \in (0,1)$. Define $u_{0,\eps} \coloneqq u_0 \ast \Phi_\eps$, then $u_{0,\eps} \in \BUC(\R^n)$ has a modulus of continuity depending on $\eps$, but we have the uniform bound
		\begin{align}
			\label{eq:cor_Loo1}
			\| u_{0,\eps} \|_\infty \leq \| u_0 \|_\infty.
		\end{align}
		Similarly to Lemma~\ref{lem:fp_O_valued}, there is $\eps_0 \in (0,1)$ small enough and a compact $K \subseteq O$ such that, for all $\eps \in (0,\eps_0)$, we have
		\begin{align}
			\label{eq:cor_Loo2}
			\overline{\Ran(u_{0,\eps})} \subseteq K.
		\end{align}
		Hence, for every $\eps \in (0,\eps_0)$, Theorem~\ref{thm:main} provides us with a global, bounded weak solution $u_\eps \colon (0,\infty) \times \R^n \to O$ to
		\begin{align}
			\label{eq:cor_Loo2.5}
			\partial_t u_\eps - \Div(a(t,x,u_\eps) \nabla u_\eps) = 0, \quad u_\eps(0) = u_{0,\eps}.
		\end{align}
		By Proposition~\ref{prop:ql_apriori}\ref{it:ql2},~\eqref{eq:cor_Loo1} and~\eqref{eq:cor_Loo2}, we have
		\begin{align}
			\label{eq:cor_Loo3}
			\| u_\eps \|_{\L^\infty} \leq \| u_{0,\eps} \|_\infty \leq \| u_0 \|_\infty
		\end{align}
		as well as
		\begin{align}
			\label{eq:cor_Loo4}
			\overline{\Ran(u_{\eps})} = \overline{\Ran(u_{0,\eps})} \subseteq K \subseteq O.
		\end{align}
		Let $R,T \in (0,\infty)$ and set $B_R \coloneqq \B(0,R)$ and $Q_{T,R} \coloneqq (0,T) \times B_R$.
		By Lemma~\ref{lem:fp_uniformly_elliptic}\ref{it:fp_uniformly_elliptic_2}, the coefficient function $A_\eps(t,x) \coloneqq a(t,x,u_\eps(t,x))$ is measurable, bounded and uniformly elliptic on $(0,T) \times \R^n$ with coefficient bounds depending only on $K$, thus independent of $\eps$ by~\eqref{eq:cor_Loo4}.
		Hence, Proposition~\ref{prop:ql_apriori}\ref{it:ql1} and~\eqref{eq:cor_Loo3} yield
		\begin{align}
			\iint_{Q_{T,R}} |\nabla u_\eps|^2 \d x \d t \leq C(T,R,K) \| u_0 \|_\infty^2.
		\end{align}
		This and~\eqref{eq:cor_Loo3} imply that $\{ u_\eps \}_{\eps \in (0,\eps_0)}$ is bounded in $\L^2((0,T); \H^1(B_R))$.
		Using~\eqref{eq:cor_Loo2.5}, it also follows that $\{ \partial_t u_\eps \}_{\eps \in (0,\eps_0)}$ is bounded in $\L^2((0,T); \H^{-1}(B_R))$.
		Therefore, since the embedding $\H^1(B_R) \subseteq \L^2(B_R)$ is compact and the embedding $\L^2(B_R) \subseteq \H^{-1}(B_R)$ is continuous, the Aubin--Lions lemma~\cite{Simon_compactness} yields pre-compactness of $\{ u_\eps \}_{\eps \in (0,\eps_0)}$ in $\L^2(Q_T)$.
		Doing this with $T_m = m$ and $R_m = m$, $m\geq 1$, there exists a sequence $(\eps_k)_{k\geq 1}$ in $(0,\eps_0)$ and $u \in \L^2_\loc([0,\infty); \H^1_\loc(\R^n))$ such that
		\begin{itemize}
			\item $\nabla u_{\eps_k} \to \nabla u \quad$ weakly in $\L^2_\loc([0,\infty) \times \R^n)$,
			\item $u_{\eps_k} \to u \quad$ strongly in $\L^2_\loc([0,\infty) \times \R^n)$,
			\item $u_{\eps_k} \to u \quad$ almost everywhere on $(0,\infty) \times \R^n$.
		\end{itemize}
		For brevity, set $A(t,x) \coloneqq a(t,x,u(t,x))$ on $(0,\infty) \times \R^n$.
		By Assumption~\ref{ass:a}\ref{it:a4}, we have $A_{\eps_k} \to A$ almost everywhere on $(0,\infty) \times \R^n$.
		Hence, using weak convergence of $\nabla u_{\eps_k}$ we obtain
		\begin{align}
			\iint_{(0,\infty) \times \R^n} A_{\eps_k} \nabla u_{\eps_k} \cdot \nabla \phi \d x \d t \to \iint_{(0,\infty) \times \R^n} A \nabla u \cdot \nabla \phi \d x \d t, \quad \phi \in \Cont_\cc^\infty([0,\infty) \times \R^n).
		\end{align}
		By the dominated convergence theorem,
		\begin{align}
			\iint_{(0,\infty) \times \R^n} u_{\eps_k} (-\partial_t \phi) \d x \d t \to \iint_{(0,\infty) \times \R^n} u (-\partial_t \phi) \d x \d t, \quad \phi \in \Cont_\cc^\infty([0,\infty) \times \R^n).
		\end{align}
		Therefore, using~\eqref{eq:cor_Loo2.5}, $u$ solves $\partial_t u -\Div(A\nabla u) = 0$ weakly. Moreover, $\| u \|_{\L^\infty} \leq \| u_0 \|_\infty$ and $\overline{\vphantom{t}\Ran}(u) \subseteq O$ follow from~\eqref{eq:cor_Loo3} and~\eqref{eq:cor_Loo4} by almost everywhere convergence of $u_{\eps_k}$ to $u$.
		To establish that $u$ is a bounded weak solution to~\eqref{eq:ql}, it only remains to show $u(0) = u_0$. Let $\psi \in \Cont_\cc^\infty(\R^n)$ and let $\phi \in \Cont_\cc^\infty([0,\infty) \times \R^n)$ be any extension of $\psi$.
		Use the above limits and apply Lemma~\ref{lem:Lions_cont} twice, to obtain
		\begin{align}
			\langle u(0), \psi \rangle &= \iint_{(0,\infty) \times \R^n} u(-\partial_t \phi) + A\nabla u \cdot \nabla \phi \d x \d t \\
			&= \lim_k \iint_{(0,\infty) \times \R^n} u_{\eps_k}(-\partial_t \phi) + A\nabla u_{\eps_k} \cdot \nabla \phi \d x \d t = \lim_k \, \langle u_{\eps_k}(0), \psi \rangle.
		\end{align}
		Since $\langle u_{\eps_k}(0), \psi \rangle = \langle u_0 \ast \Phi_{\eps_k}, \psi \rangle \to \langle u_0, \psi \rangle$ by convergence of the mollified sequence, we find $u(0) = u_0$ as desired.
	\end{proof}

	\appendix

	\section{Carleson measure estimate}
	\label{sec:carleson}

	In this appendix, we provide an elementary proof for Proposition~\ref{prop:apriori}\ref{it:Carleson}.
	The result is a special case of~\cite[Thm.~7.4]{AMP19}, taking its localization to finite time intervals (explained in~\cite[Sec.~10]{AMP19}) into account.

	\begin{proposition}
		\label{prop:carleson}
		Let $u \colon (0,T) \times \R^n \to \R$ be a bounded weak solution to~\eqref{eq:CP_NAT}. Then, there exists a finite constant $C$ depending on the coefficient bounds of $A$, such that we have the Carleson measure estimate
		\begin{align}
			\sup_{x\in \R^n} \sup_{t \in (0,T]} \bigg( \int_0^t \fint_{\B(x,\sqrt{t})} |\nabla u(s,y)|^2 \d y \d s \bigg)^\frac{1}{2} \leq C \| u \|_{\L^\infty(T)}.
			\end{align}
	\end{proposition}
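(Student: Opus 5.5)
The plan is an energy estimate in which $u^2$ is tested against a cut-off that vanishes at the lateral boundary but not at the final time. The initial trace is never used: the temporal cut-off is one-sided at $s=t$, and the contribution near $s=0$ is handled by a limiting argument that only uses boundedness of $u$.

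\textbf{Reduction.} Fix $x\in\R^n$ and $t\in(0,T]$, and set $r=\sqrt t$, $B=\B(x,r)$, $2B=\B(x,2r)$. Choose $\chi\in\Cont_\cc^\infty(2B)$ with $0\le\chi\le1$, $\chi=1$ on $B$ and $|\nabla\chi|\lesssim r^{-1}$. It suffices to show
\begin{align}
\int_\eps^{t'}\int_{\R^n}|\nabla u|^2\chi^2\d y\d s\lesssim r^n\|u\|_{\L^\infty(T)}^2
\end{align}
uniformly in $0<\eps<t'<t$. Monotone convergence then gives the claim after dividing by $|B|\approx r^n$.

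\textbf{Energy identity on $[\eps,t']$.} On $(\eps/2,T)$ the function $u$ is a weak solution with $\nabla u\in\L^2_\loc$, so Lemma~\ref{lem:Lions_cont} applies after a time shift. We may therefore test with $u\chi^2$, using a Steklov average to justify this. Writing $\lambda,\Lambda$ for the coefficient bounds, this yields
\begin{align}
\tfrac12\int u(t')^2\chi^2-\tfrac12\int u(\eps)^2\chi^2+\int_\eps^{t'}\!\!\int A\nabla u\cdot\nabla u\,\chi^2=-2\int_\eps^{t'}\!\!\int \chi u\,A\nabla u\cdot\nabla\chi .
\end{align}

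\textbf{Absorbing the cross term.} Ellipticity bounds the third term from below by $\lambda\int_\eps^{t'}\!\int|\nabla u|^2\chi^2$. For the right-hand side, Young's inequality gives the bound
\begin{align}
\tfrac{\lambda}{2}\int_\eps^{t'}\!\!\int|\nabla u|^2\chi^2+C(\lambda,\Lambda)\int_0^t\!\!\int u^2|\nabla\chi|^2 .
\end{align}
The last integral is at most $t\cdot r^{-2}|2B|\,\|u\|_\infty^2\approx r^n\|u\|_\infty^2$. Discarding the nonnegative term at time $t'$, we are left with the bound $\frac12\int u(\eps)^2\chi^2\le\frac12|2B|\,\|u\|_\infty^2\lesssim r^n\|u\|_\infty^2$. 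It is important here that $u(\eps)$ is bounded by $\|u\|_{\L^\infty(T)}$ for almost every $\eps$, which holds by Fubini, and that we do not need any information about an initial trace.

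\textbf{Main obstacle.} Everything above is elementary. The only delicate point is justifying the energy identity at the fixed times $\eps$ and $t'$, that is, that $u\in\Cont([\eps,t'];\L^2_\loc)$ and that $u\chi^2$ is admissible. This follows from Lemma~\ref{lem:Lions_cont} applied on $(\eps/2,T)$, where $\nabla u\in\L^2_\loc$ up to the new initial time holds by definition of a weak solution. The identity is then obtained by the standard Steklov-average or mollification-in-time argument, with the formula $\tfrac{d}{ds}\int u^2\chi^2=2\langle\partial_s u,u\chi^2\rangle$ holding in the Lions sense. The case $t=T$ is covered by letting $t'\uparrow T$.
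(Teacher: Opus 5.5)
Your proof is correct and is essentially the paper's argument. The paper invokes the local energy inequality \cite[(3.1)]{AMP19} on $(2\eps,t)\times B$, bounds the resulting boundary term $\|u(2\eps)\|_{\L^2(2B)}^2$ and lateral term $t^{-1}\int_{2\eps}^t\|u(s)\|_{\L^2(2B)}^2\d s$ by $t^{n/2}\|u\|_{\L^\infty(T)}^2$, and lets $\eps\to0$ via Fatou's lemma; you derive that same Caccioppoli-type inequality by hand by testing with $u\chi^2$, and your use of $t'<t$ followed by $t'\uparrow t$ is a harmless extra precaution for the endpoint case $t=T$.
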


	\begin{proof}
		Fix $x\in \R^n$ and $t \in (0,T]$. For convenience, put $B \coloneqq \B(x,\sqrt{t})$ and let $2B$ be the concentric ball of radius $2\sqrt{t}$. Also, let $\eps \in (0, \nicefrac{t}{2})$. Since $u$ is a weak solution, we have $\nabla u \in \L^2((\eps,t) \times 2B)$. Hence,~\cite[(3.1)]{AMP19} applied with $a \coloneqq \eps$, $b \coloneqq t$ and $a' \coloneqq 2\eps$ gives
		\begin{align}
			\int_{2\eps}^t \| \nabla u(s,\cdot) \|_{\L^2(B)}^2 \d s \lesssim \| u(2\eps, \cdot) \|_{\L^2(2B)}^2 + \int_{2\eps}^t t^{-1} \| u(s, \cdot) \|_{\L^2(2B)}^2 \d s,
		\end{align}
		where the implicit constant depends on the coefficient bounds of $A$. It follows
		\begin{align}
			\int_{2\eps}^t \int_B |\nabla u(s,y)|^2 \d y \d s \lesssim t^\frac{n}{2} \| u \|_{\L^\infty(T)}^2.
		\end{align}
		We divide this bound by $t^{\nicefrac{n}{2}} \approx |B|$ and use Fatou's lemma, to find
		\begin{align}
			\int_0^t \fint_{B} |\nabla u(s,y)|^2 \d y \d s \leq \liminf_{\eps \to 0} \int_{2\eps}^t \fint_{B} |\nabla u(s,y)|^2 \d y \d s \lesssim \| u \|_{\L^\infty(T)}^2.
		\end{align}
		Taking the square root of this estimate lets us conclude.
	\end{proof}

\end{document}